\theoremstyle{plain}
\newtheorem{thm}{Theorem}[section]
\newtheorem{lem}[thm]{Lemma}
\newtheorem{prop}[thm]{Proposition}
\newtheorem{cor}[thm]{Corollary}
\theoremstyle{definition}
\newtheorem{defn}[thm]{Definition}
\theoremstyle{remark}
\newtheorem*{dem}{Proof}
\theoremstyle{plain}
\def\S#1{\mathbb{S}^{#1}}
\def\Real#1{\Re(#1)}
\def\Esp#1#2{\text{\rm E}_{#1}\left[#2\right]}
\def\E#1{\mathcal{E}_{#1}}
\def\X{\mathfrak{X}_{*}^{(N)}}
\def\R{\mathbb{R}}
\def\C{\mathbb{C}}
\begin{document}

\title{A generalization of the spherical ensemble to even-dimensional spheres}

\author{Carlos Beltr\'{a}n and Uju\'{e} Etayo}

\thanks{This research has been partially supported by Ministerio de Economía y Competitividad, Gobierno de España, through grants MTM2017-83816-P,  MTM2017-90682-REDT, and by the Banco de Santander and Universidad de Cantabria grant 21.SI01.64658.}

\date{\today{}}

\subjclass[2010]{31C12 (primary) and 31C20, 52A40 (secondary).}

\keywords{Determinantal point processes, Riesz energy.}

\address{Departamento de Matem\'aticas, Estad\'{\i}stica y Computaci\'on, 
Universidad de Cantabria,  Fac. Ciencias, Avd. Los Castros s/n, 39005 Santander, Spain}
\email{etayomu@unican.es}

\address{Departamento de Matem\'aticas, Estad\'{\i}stica y Computaci\'on, 
Universidad de Cantabria,  Fac. Ciencias, Avd. Los Castros s/n, 39005 Santander, Spain}
\email{beltranc@unican.es}

\begin{abstract}

In a recent article \cite{EJP3733},  Alishahi and Zamani  discuss  the spherical ensemble,  a rotationally invariant determinantal point process on $\mathbb{S}^{2}$. 
In this paper we extend this process in a natural way to the $2d$--dimensional sphere $\mathbb{S}^{2d}$. We prove that the expected value of the Riesz s-energy associated to this determinantal point process has a reasonably low value compared to the known asymptotic expansion of the minimal Riesz s-energy.

\end{abstract}

\maketitle

\tableofcontents


\section{Introduction}

Determinantal point processes (DPPs) are becoming a standard tool for generating random points on a set $\mathbb{X}$. One of the main properties of these point processes is that their statistics can be described in terms of a kernel $K(x,y), x,y \in \mathbb{X}$, which generally turns out to be the reproducing kernel of a finite dimensional subspace of $L^{2}(\mathbb{X})$. The complete theory of DPPs can be found in the excellent book \cite{Hough_zerosof}; see also \cite{BE2018} for a brief (yet, sufficient for many purposes) introduction and explanation of the main concepts.

We are interested in using DPPs for generating points in the sphere $\mathbb{S}^{d}$ that are well distributed in some sense.
For this aim, it is essential to find subspaces of $L^{2}(\mathbb{S}^{d})$ whose kernels preserve the properties of the structure of the sphere.
In \cite{BMOC2015energy} a DPP using spherical harmonics (i.e. associated to the subspace of $L^{2}(\mathbb{S}^{d})$ given by the span of bounded degree real--valued spherical harmonics) is described. 
The random configurations coming from that point process are called the {\em harmonic ensemble}, that turns out to be optimal in the sense that it minimizes Riesz 2-energy (see Sec. \ref{section_energy}) among a certain class of DPPs obtained from subspaces of real--valued functions defined in $\S{d}$.

However in the very special case of the sphere of dimension two, there exists another DPP, the so--called {\em spherical ensemble}, that corresponds to a subspace of $L^2(\mathbb{S}^2)$ coming from a weighted space of polynomials in the complex plane. The spherical ensemble produces low--energy configurations that are indeed better than those of the harmonic ensemble, see \cite{EJP3733}. A fundamental property of the spherical ensemble proved in \cite{krishnapur2009} is that it is equivalent to computing the generalized eigenvalues of pairs of matrices $(A,B)$ with complex Standard Gaussian entries. Generalized eigenvalues live naturally in the complex projective space $\mathbb{P}(\C^2)$ which is by the Riemann sphere model equivalent to the $2$--sphere.

In \cite{BE2018} a generalization of the spherical ensemble to the general projective space $\mathbb{P}(\C^d)$ was presented and called the {\em projective ensemble}. Its natural lift to the odd dimensional sphere in $\C^{d+1}\equiv\R^{2d+2}$ was proved to have lower energy (for a family of energies) than those coming from the harmonic ensemble.

In this paper we generalize the spherical ensemble to the case of spheres of even dimension. 
We are also able to compute a bound for the expected value of the Riesz s-energy of a set of $N$ points coming for this generalization.
In order to obtain this bound we prove a inequality regarding the incomplete beta function that we haven't found in the literature.

The structure of the paper is as follows. 
In section \ref{sec:kernel} we discuss the properties that a reproducing kernel  on the sphere might have.
In section \ref{sec:iran} we present briefly the spherical ensemble in the $2-$dimensional sphere.
In section \ref{sec:2d} we describe our generalization to the $2d-$dimensional sphere.
We state our main result bounding the Riesz s-energy of this DPP in section \ref{section_energy} and in section \ref{sec:beta} we prove an inequality regarding the incomplete beta function. 
Finally, in section \ref{sec_proofs} we give the proofs of the theorems.


\section{Homogeneous vs isotropic kernels}
\label{sec:kernel}


The symmetries of the sphere suggests what type of properties the ``good kernels'' should satisfy.
As a final goal, we would like the kernels to be invariant under the isometry group of the sphere, but weaker statements could also be useful.
A lot of adjectives describing kernels can be found in the literature; we now state our terminology in aims of clarity.
 
\begin{defn}\label{defn:kernel:isotropic}
A DPP of $N$ points on $\mathbb{S}^{d}$ has \textit{isotropic} associated reproducing kernel $K(p,q)$ if there exists a function $f:[0,2] \longrightarrow \mathbb{R}$ such that
\begin{equation*}
|K(p,q)| = f(||p-q||)
\end{equation*}
for all $p,q \in \mathbb{S}^{d}$.
\end{defn} 

When the kernel is isotropic, we say that the DPP is rotationally invariant.
A weaker property will be that of homogeneus intensity.
 
\begin{defn}\label{defn:kernel:homogeneus}
A DPP of $N$ points on $\mathbb{S}^{d}$ has \textit{homogeneous} associated reproducing kernel if $K(p,p)$ is constant for all $p \in \mathbb{S}^{d}$.
\end{defn}  
 
Actually, the value of this constant is determined by the volume of the sphere:
\begin{equation}\label{eq:volS}
Vol (\mathbb{S}^{n})
=
\frac{2 \pi^{\frac{n+1}{2}}}{\Gamma \left( \frac{n+1}{2} \right)}.
\end{equation}
 
 \begin{prop}
 \label{prop_6}
 Let $p \in \mathbb{S}^{2d}$, if a DPP of $N$ points on $\mathbb{S}^{2d}$ has associated reproducing kernel satisfying that $K(p,p)$ is constant, then
 \begin{equation*}
 K(p,p) 
 = 
\frac{N}{\text{Vol}(\mathbb{S}^{2d})} 
 = 
 \frac{N\Gamma\left( d+\frac{1}{2} \right)}{2\pi^{d+\frac{1}{2}}}.
 \end{equation*}
 \end{prop}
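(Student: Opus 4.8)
The plan is to use the single structural hypothesis at hand, namely that the diagonal $K(p,p)$ is constant, together with the fundamental normalization property of the reproducing kernel of a determinantal point process. The key fact I would invoke is that integrating the diagonal of the kernel over the whole sphere recovers the expected number of points of the process, which here is deterministically $N$; that is,
\begin{equation*}
\int_{\mathbb{S}^{2d}} K(p,p)\, d\sigma(p) = N,
\end{equation*}
where $d\sigma$ denotes the surface measure (of total mass $\mathrm{Vol}(\mathbb{S}^{2d})$), taken as the background measure of the DPP. This identity can be seen in two equivalent ways: either as the statement that $K(p,p)$ is the first intensity, i.e. the one-point correlation function, of the DPP, so that its integral is the mean number of points; or, more elementarily, by writing $K(p,q)=\sum_{i=1}^{N} e_i(p)\overline{e_i(q)}$ for an orthonormal basis $\{e_i\}_{i=1}^{N}$ of the underlying $N$-dimensional subspace of $L^2(\mathbb{S}^{2d})$, so that the integral equals $\sum_{i=1}^{N}\|e_i\|_{L^2}^2=N$.

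With this in hand the conclusion is immediate. Since by hypothesis $K(p,p)$ does not depend on $p$, I would pull it out of the integral to obtain
\begin{equation*}
K(p,p)\,\mathrm{Vol}(\mathbb{S}^{2d}) = N,
\end{equation*}
whence $K(p,p)=N/\mathrm{Vol}(\mathbb{S}^{2d})$. It then only remains to substitute the volume formula \eqref{eq:volS} with $n=2d$, using $\frac{n+1}{2}=d+\frac12$ so that $\pi^{\frac{n+1}{2}}=\pi^{d+\frac12}$ and $\Gamma\left(\frac{n+1}{2}\right)=\Gamma\left(d+\frac12\right)$, which rewrites $\mathrm{Vol}(\mathbb{S}^{2d})=2\pi^{d+\frac12}/\Gamma\left(d+\frac12\right)$ and yields the closed form stated in the proposition.

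I do not expect a serious obstacle here: the whole content lies in the normalization identity above, and the remainder is an elementary Gamma-function simplification. The only point that deserves care is the justification of $\int_{\mathbb{S}^{2d}} K(p,p)\,d\sigma = N$, and in particular the choice of the unnormalized surface measure as background measure, since it is precisely this choice that produces the factor $\mathrm{Vol}(\mathbb{S}^{2d})$ in the denominator. I would therefore state explicitly which of the two arguments above I rely on, as this is the sole place where the hypotheses on the process — rather than purely pointwise properties of an abstract function $K$ — actually enter.
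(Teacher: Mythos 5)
Your proof is correct and follows essentially the same route as the paper: both identify $K(p,p)$ with the first intensity function, integrate it over $\mathbb{S}^{2d}$ to obtain the expected number of points $N$, and then use constancy to pull $K(p,p)$ out of the integral before substituting the volume formula \eqref{eq:volS}. Your additional remark that the normalization can also be seen from the orthonormal-basis expansion of a projection kernel is a fine supplementary justification, but the core argument coincides with the paper's.
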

 
The proof follows the definition of first intensity function (see \cite[Definition 1.2.2.]{Hough_zerosof}).
Given a DPP of $N$ points with kernel $K(p,q)$ in $\mathbb{S}^{d}$ then the average number of points contained in the subset $A \subset \mathbb{S}^{d}$ is given by
\begin{equation*}
\int_{A} \rho_{1} dp
=
\int_{A} K(p,p) dp
=
\text{Vol}(A)K(p,p),
\end{equation*}
where $\rho_{1}$ is the first intensity joint function.
Note that if we have a homogeneus kernel then the expected number of points contained on any Borel subset depend only on its volume.
 

\section{An isotropic projection kernel in the $2$--sphere}
\label{sec:iran}


In \cite{EJP3733} Alishahi and Zamani study the energy of the spherical ensemble. A brief description of this point process is as follows: let $A,B$ be $N\times N$ matrices with complex Standard Gaussian entries, that is, each of the entries of $A$ and $B$ is independently and identically distributed by choosing its real and imaginary parts according to the real Gaussian distribution with mean $0$ and variance $1/2$. Then, the spherical ensemble is obtained by sending the generalized eigenvalues $\lambda_1,\ldots,\lambda_N\in\C$ of the matrix pencil $(A,B)$ to the sphere via the stereographic projection. Note that these $\lambda_i$ are the complex numbers $\lambda$ such that $\det(\lambda A-B)=0$ and there are (for generic $A,B$) $N$ solutions to this equation. Equivalently, one can search for the generalized eigenvalues of $(A,B)$ in the complex projective space, i.e. for points $(\alpha,\beta)\in\C^2$ such that $\det(\alpha B-\beta A)=0$, consider them as points in the Riemann sphere and send them to the unit sphere through an homotety. These two processes are equivalent. 

It has been shown by Krishnapur \cite{krishnapur2009} that this process is determinantal and the kernel comes from the projection kernel of the $N$-dimensional subspace of $\text{\rm L}^{2}(\mathbb{C},\C)$ with basis
\begin{equation*}
\left\{\sqrt{\frac{N}{\pi}\binom{N-1}{k}}\frac{z^{k}}{(1 + |z|^{2})^{\frac{N+1}{2}}} : 0 \leq k \leq N-1\right\},
\end{equation*}
where we are taking the usual Lebesgue measure $\mu$ in $\C$ which makes this basis orthonormal. \noindent The projection kernel is then given by
\begin{align*}
K(z,w) =&\sum_{k=0}^{N-1}\frac{N}{\pi}\binom{N-1}{k}\frac{(z\overline w)^k}{(1 + |z|^{2})^{\frac{N+1}{2}}(1 + |w|^{2})^{\frac{N+1}{2}}} \\
= & \frac{N}{\pi} \frac{(1 + z \overline{w})^{N-1}}{ (1 + |z|^{2})^{\frac{N+1}{2}} (1 + |w|^{2})^{\frac{N+1}{2}} }.
\end{align*}

\noindent The push--forward projection DPP in $\mathbb{S}^{2}$ given by the stereographical projection $\Pi: \mathbb{S}^{2} \longrightarrow \mathbb{C}$ (see \cite[Proposition 2.5]{BE2018}) has kernel:
\begin{equation*}
K_{*}^{(N)}(p,q) = K_{*}^{(N)}(\Pi^{-1}(z), \Pi^{-1}(w)) = \frac{N}{4\pi} \frac{(1+z\overline{w})^{N-1}}{(1 + |z|^{2})^{\frac{N-1}{2}} (1 + |w|^{2})^{\frac{N-1}{2}}}.
\end{equation*}
This point process has a number of nice properties, including $K_{*}^{(N)}(p,p)=N/(4\pi)$ (i.e. the process is homogeneus) and even more, it is isotropic:
\begin{equation*}
\left| K_{*}^{(N)}(p,q) \right| = \frac{N}{4\pi} \left( \frac{1 + \left\langle p,q \right\rangle }{2} \right)^{\frac{N-1}{2}}.
\end{equation*}
From this fact, the expected $s$--energy (as well as other interesting quantities) of random configurations drawn from this DPP can be computed, see \cite{EJP3733}.


\section{A  homogeneous projection kernel in the $2d$--sphere}
\label{sec:2d}


 It is not a trivial task to generalize the spherical ensemble to high--dimensional spheres. Here is the reason: the most natural approach is to take the subspace of $\mathrm{L}^2(\C^d,\C)$ spanned by the family
\begin{equation}\label{eq:01}
I_{d,L}=
\left\{
\sqrt{C_{\alpha_{1},...,\alpha_{d}}^{L}}
\frac{z_{1}^{\alpha_{1}}\ldots z_{d}^{\alpha_{d}}}{(1+\|z\|^{2})^{\frac{d+L+1}{2}}} :  {\alpha_{1}+\ldots +\alpha_{d} \leq L} 
\right\},
\end{equation}
where $C_{\alpha_{1},...,\alpha_{d}}^{L}$ is a constant that makes the basis orthonormal.
From \cite[Definition 3.1]{BE2018} we know that the reproducing kernel of the space spanned by $I_{d,L}$ is
\begin{equation}\label{eq:original}
K(z,w)
=
\frac{N d!}{\pi^{d}}
\frac{\left( 1+ \langle z,w \rangle \right)^{L}}{\left( 1 + ||z||^{2} \right)^{\frac{d+L+1}{2}} \left( 1 + ||w||^{2} \right)^{\frac{d+L+1}{2}}}.
\end{equation}

Then it is tempting to just map the associated reproducing kernel into the sphere $\mathbb{S}^{2d}$ by the stereographic projection. It turns out that the resulted associated DPP in $\S{2d}$ is not isotropic nor even homogeneus, and thus it does not satisfy the most basic properties required in a ``good'' kernel.

\noindent In order to avoid this problem, we will modify the corresponding DPP on $\mathbb{S}^{2d}$ by a weight function. Consider the stereographic projection from $\mathbb{S}^{2d}$ to $\mathbb{C}^{d}$:

\begin{equation}
\label{eq:stg}
\begin{matrix}
\Pi=\Pi_d:&\mathbb{S}^{2d}   &\longrightarrow & \C^d\equiv\mathbb{R}^{2d}\\
 &(p_1,\ldots,p_{2d+1}) &\rightarrow&\frac{1}{1-p_{2d+1}}(p_1,\ldots,p_{2d})\\
           &\left( \frac{2y_{1}}{\| y \|^{2}+1},\ldots ,\frac{2y_{2d}}{\| y \|^{2}+1},\frac{\| y \|^{2}-1}{ \|y\|^{2}+1} \right) &\leftarrow& (y_{1},\ldots y_{2d})\\
\end{matrix}
\end{equation}
where the identification $\C^d\equiv\R^{2d}$ is given by
\[
 (z_1,\ldots,z_d)\equiv\left(\mathrm{Re}(z_1),\mathrm{Im}(z_1),\ldots,\mathrm{Re}(z_d),\mathrm{Im}(z_d)\right).
\]

\begin{defn}
\label{def_1}
Let $ g: (0, \infty) \longrightarrow (0, \infty)$ be an increasing $\boldsymbol{\mathcal{C}^{1}}$ function  such that $\lim\limits_{x\rightarrow0}g(x) = 0$ and $\lim\limits_{x \rightarrow \infty} g(x) =  \infty$.
Now, we define the associated function 
\begin{equation*}
\begin{matrix}
\varphi=\varphi_g: & \mathbb{R}^{2d}\setminus\{0\} &\longrightarrow &\mathbb{R}^{2d}\setminus\{0\} \\
		 & x &\mapsto &g(\|x\|) \frac{x}{\|x\|}
\end{matrix}
\end{equation*}
Note that $\varphi$ is biyective and its inverse is given by $\varphi^{-1} (y) = \frac{g^{-1}(\|y\|)}{\|y\|} y$.
\end{defn}

Let $I_{d,L}$ be as in equation \eqref{eq:01}.
Then for all $N$ of the form $N={d+L \choose d}$ there exists a projection DPP of $N$ points  $\mathfrak{X}_\mathcal{H}$ in $\C^d$.
Let us consider the map $\phi=\Pi_d^{-1}\circ \varphi_g: \mathbb{C}^{d} \longrightarrow \mathbb{S}^{2d}$  for any fixed $g:(0,\infty)\to(0,\infty)$ as in Definition \ref{def_1}.
Then there exists a push--forward projection DPP in $\S{2d}$ (see \cite[Proposition 2.5]{BE2018}). We denote this process by $\mathfrak{X}_*^{(N,d,g)}$.

\begin{prop}
\label{prop_5}
Let $d\geq1$, let $g:(0,\infty)\to(0,\infty)$ be as in Definition \ref{def_1} and let $N$ be of the form $N={d+L \choose d}$. Then, $\mathfrak{X}_*^{(N,d,g)}$ is a DPP in $\mathbb{S}^{2d}$ with associated kernel
\begin{equation}
\label{eq_5}
K_*^{(N,d,g)}(p,q)=\frac{N d!  (1 + \left\langle z,w \right\rangle)^{L} R(\|z\|) R(\|w\|) }{{\pi}^{d}2^{2d}},
\end{equation}
where $z=\varphi_g^{-1}(\Pi_d(p)),\;w=\varphi_g^{-1}(\Pi_d(q))$ and
\begin{equation*}
R(t) = \frac{ (g(t)^{2}+1)^{d} t^{d-\frac{1}{2}}}{\sqrt{g'(t)} g(t)^{d-\frac{1}{2}} (1+t^{2})^{\frac{d+L+1}{2}}}.
\end{equation*}
\end{prop}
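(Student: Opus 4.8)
The plan is to obtain $K_*^{(N,d,g)}$ by transporting the projection kernel $K$ of \eqref{eq:original} through the diffeomorphism $\phi=\Pi_d^{-1}\circ\varphi_g\colon\C^d\to\S{2d}$, and to reduce the whole statement to one Jacobian computation. By \cite[Proposition 2.5]{BE2018}, pushing a projection DPP forward under a diffeomorphism again yields a projection DPP, and if $K$ is the kernel on $\C^d$ relative to the Lebesgue measure $\mu$ that makes $I_{d,L}$ orthonormal, then the kernel of the push-forward on $\S{2d}$ (relative to surface measure) is
\begin{equation*}
K_*^{(N,d,g)}(p,q)=\sqrt{h(p)\,h(q)}\;K\bigl(\phi^{-1}(p),\phi^{-1}(q)\bigr),
\qquad h(p)=\frac{1}{\lvert\det D\phi(\phi^{-1}(p))\rvert}.
\end{equation*}
Since $\phi^{-1}=\varphi_g^{-1}\circ\Pi_d$, we have $\phi^{-1}(p)=z$ and $\phi^{-1}(q)=w$ with $z,w$ as in the statement, so everything reduces to computing the Jacobian factor $h$ and then checking that the surviving terms assemble into $R$.

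First I would compute $\det D\phi$ by the chain rule, $D\phi=D\Pi_d^{-1}\big|_{\varphi_g(z)}\cdot D\varphi_g\big|_z$, handling the two factors separately. For the radial map $\varphi_g(x)=g(\|x\|)\,x/\|x\|$ I would diagonalize the differential along the splitting of $\R^{2d}$ into the radial line $\R x$ and its orthogonal complement: the radial direction is scaled by $g'(r)$ and each of the $2d-1$ tangential directions by $g(r)/r$, so that
\begin{equation*}
\lvert\det D\varphi_g(x)\rvert=g'(r)\left(\frac{g(r)}{r}\right)^{2d-1},\qquad r=\|x\|.
\end{equation*}
For the inverse stereographic projection I would use that $\Pi_d^{-1}$ is conformal with conformal factor $2/(1+\|y\|^2)$, whence its absolute Jacobian in dimension $2d$ equals $\bigl(2/(1+\|y\|^2)\bigr)^{2d}$; evaluating at $y=\varphi_g(z)$, where $\|y\|=g(r)$, gives the factor $\bigl(2/(1+g(r)^2)\bigr)^{2d}$.

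Multiplying the two Jacobians and inverting yields
\begin{equation*}
h(p)=\frac{(1+g(r)^2)^{2d}\,r^{2d-1}}{2^{2d}\,g'(r)\,g(r)^{2d-1}},\qquad r=\|z\|.
\end{equation*}
Taking square roots, it then remains to regroup the $z$-- and $w$--dependent factors of $\sqrt{h(p)h(q)}\,K(z,w)$ and to verify the single identity
\begin{equation*}
\frac{\sqrt{h(p)}}{(1+\|z\|^2)^{\frac{d+L+1}{2}}}=\frac{R(\|z\|)}{2^{d}},
\end{equation*}
which is precisely the definition of $R$, together with its analogue in $w$. Substituting these two identities and the constant $Nd!/\pi^{d}$ from \eqref{eq:original} produces \eqref{eq_5}, the two factors $2^{d}$ combining into the $2^{2d}$ in the denominator.

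The conceptual core is the correct use of the push-forward formula of \cite[Proposition 2.5]{BE2018}; the technical heart, and the step I expect to be most delicate, is the Jacobian computation---specifically establishing the conformal factor of the $2d$--dimensional inverse stereographic projection and getting the orientation of the Radon--Nikodym derivative right (so that $h=1/\lvert\det D\phi\rvert$ rather than its reciprocal). Once these are settled, matching the remaining powers of $(1+t^2)$, $t$, $g(t)$ and $g'(t)$ against $R$ is routine bookkeeping.
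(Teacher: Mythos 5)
Your proposal is correct and follows essentially the same route as the paper: both invoke \cite[Proposition 2.5]{BE2018} for the push-forward kernel with the Radon--Nikodym factor $\sqrt{h(p)h(q)}$, reduce everything to the Jacobian of $\varphi_g^{-1}\circ\Pi_d$ via the chain rule (radial/tangential splitting for $\varphi_g$, conformality of the stereographic projection), and arrive at the same expression for $R$. The only cosmetic difference is that you compute $\det D\phi$ and invert, while the paper computes $\mathrm{Jac}(\phi^{-1})$ directly.
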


We now describe how to choose $g$ in such a way that the kernel $K_{*}^{(N,d,g)}(p,q)$ is homogeneus for all  $N={d+L \choose d}$.

\begin{prop}
\label{prop_3}
Let $d\geq1$. There exists a unique function $g=g_d$ satisfying the conditions of Definition \ref{def_1} that makes $K_{*}^{(N,d,g)}(p,p)$ constant for all $N$ of the form $N={d+L \choose d}$. Moreover, $g$ satisfies
\begin{equation*}
\mathrm{I}_{\frac{g^{2}}{1+g^2}}(d,d) =
\left(\frac{t^{2}}{1+t^{2}}\right)^{d},
\end{equation*}
where $\mathrm{I}_{\frac{g^{2}}{1+g^2}}(d,d)$ is the incomplete beta function (see equation \eqref{eq:betas} for a definition) and $g=g(t)$.
\end{prop}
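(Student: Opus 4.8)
The plan is to evaluate the diagonal $K_*^{(N,d,g)}(p,p)$ using Proposition \ref{prop_5}, impose homogeneity, and solve the resulting first-order ODE for $g$. Setting $q=p$ in \eqref{eq_5} gives $z=w$; writing $t=\|z\|$ so that $\langle z,z\rangle=t^2$, the factor $(1+\langle z,w\rangle)^L$ becomes $(1+t^2)^L$ and combines with $R(\|z\|)^2$. After inserting the expression for $R$ and simplifying, the crucial observation is that the exponent $L$ disappears completely from the $t$-dependent part: the numerator factor $(1+t^2)^L$ cancels against the $(1+t^2)^{-(d+L+1)}$ hidden in $R(t)^2$, leaving
\[
K_*^{(N,d,g)}(p,p)=\frac{N\,d!}{\pi^{d}2^{2d}}\cdot\frac{(g(t)^2+1)^{2d}\,t^{2d-1}}{g'(t)\,g(t)^{2d-1}\,(1+t^2)^{d+1}}.
\]
Since the $t$-dependent factor no longer involves $L$ (equivalently $N$), a single $g$ makes $K_*^{(N,d,g)}(p,p)$ constant for \emph{all} $N=\binom{d+L}{d}$ simultaneously precisely when this factor equals some constant $c$. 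This is the structural point that allows the quantifier ``for all $N$'' to be met by one function.

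Homogeneity is then equivalent to the separable equation
\[
\frac{g^{2d-1}}{(g^2+1)^{2d}}\,dg=\frac1c\,\frac{t^{2d-1}}{(1+t^2)^{d+1}}\,dt,
\]
and the next step is to integrate both sides in closed form. On the left I would substitute $v=g^2/(1+g^2)$, which reduces the integrand to $v^{d-1}(1-v)^{d-1}$ and identifies the primitive with the incomplete beta integral $B\!\left(\tfrac{g^2}{1+g^2};d,d\right)$; on the right I would substitute $w=t^2/(1+t^2)$, which collapses the integrand to $w^{d-1}$ with primitive $\tfrac1d\left(\tfrac{t^2}{1+t^2}\right)^d$. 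The boundary behaviour from Definition \ref{def_1} then fixes the constants: $g(t)\to0$ as $t\to0$ forces the integration constant to vanish, since both sides equal $0$ there, while $g(t)\to\infty$ as $t\to\infty$ sends $v\to1$ and $w\to1$, which determines $c$ through $B(d,d)=1/(cd)$. Dividing by the complete beta integral $B(d,d)$ converts $B$ into the regularized incomplete beta function $\mathrm{I}$ and yields exactly the stated identity $\mathrm{I}_{g^2/(1+g^2)}(d,d)=\left(t^2/(1+t^2)\right)^d$.

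Finally I would establish existence and uniqueness directly from this identity, which is cleaner than arguing from the ODE. The map $x\mapsto\mathrm{I}_x(d,d)$ is a $\mathcal{C}^1$ strictly increasing bijection of $[0,1]$ onto $[0,1]$, because its derivative $x^{d-1}(1-x)^{d-1}/B(d,d)$ is positive on $(0,1)$, and $t\mapsto(t^2/(1+t^2))^d$ is a $\mathcal{C}^1$ strictly increasing bijection of $(0,\infty)$ onto $(0,1)$. Composing the inverse of the former with the latter defines a unique value $x(t)=g(t)^2/(1+g(t)^2)\in(0,1)$ for each $t$, and since $g\mapsto g^2/(1+g^2)$ is itself a bijection $(0,\infty)\to(0,1)$ we recover a unique $g=g_d$; its monotonicity, $\mathcal{C}^1$ regularity (via the inverse function theorem, using positivity of the relevant derivatives) and the limits at $0$ and $\infty$ all follow at once. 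I expect the main obstacle to be the two integral identifications — choosing the substitutions and tracking the exponents so that the incomplete beta function emerges with the correct parameters $(d,d)$ — whereas the cancellation of $L$ and the inversion argument for uniqueness are conceptually central but technically light.
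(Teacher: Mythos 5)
Your proposal is correct and follows essentially the same route as the paper: evaluate the diagonal via Proposition \ref{prop_5}, note that the dependence on $L$ cancels, reduce homogeneity to a separable first-order ODE, and integrate both sides into the stated incomplete beta identity before verifying the properties of Definition \ref{def_1}. The only cosmetic differences are that the paper fixes the constant a priori via Proposition \ref{prop_6} together with Legendre's duplication formula (rather than from the boundary conditions $g(0)=0$, $g(\infty)=\infty$ as you do), and that it writes the left-hand primitive as $\tfrac12\left[\mathrm{B}(d,d)-\mathrm{B}_{1/(1+g^2)}(d,d)\right]$ and then invokes the reflection identity $\mathrm{I}_x(a,b)=1-\mathrm{I}_{1-x}(b,a)$, whereas your substitution $v=g^2/(1+g^2)$ lands on $\mathrm{B}_{g^2/(1+g^2)}(d,d)$ directly.
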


We denote simply by $K_*^{(N)}(p,q)$ and $\mathfrak{X}_*^{(N)}$ the associated kernel and projection DPP, dropping in the notation the dependence on $d$.
Note that for $d=1$ we have
\begin{equation*}
\mathrm{I}_{\frac{g^{2}}{1+g^2}}(1,1) 
=
\frac{g(t)^{2}}{1+g(t)^2}
=
\frac{t^{2}}{1+t^{2}}
\Rightarrow
g(t) = t,
\end{equation*}
hence $\varphi_g:\R^2\to\R^2$ is the identity function and we recover the original spherical ensemble.
The graphic of $g$ for some values of $d$ can be see in Figure \ref{fig:g}.

\begin{figure}[htp]
\includegraphics[width=0.8\textwidth]{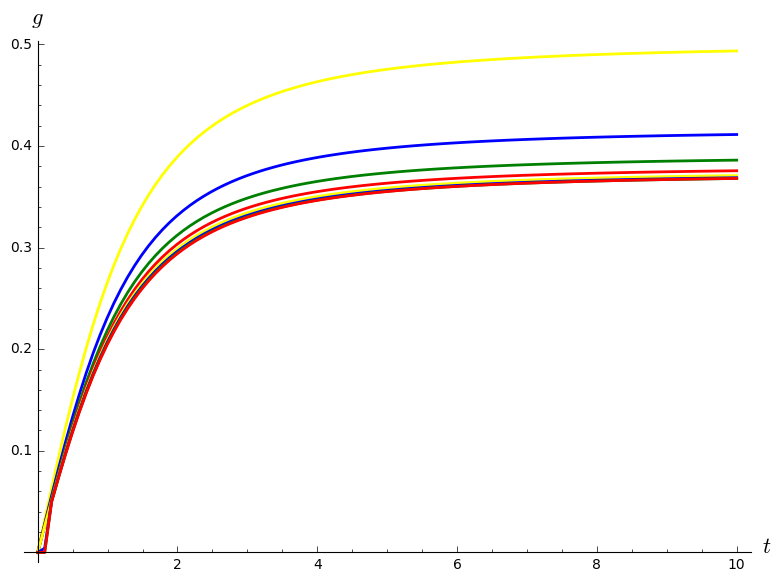}
\caption{The function $g$ for different values of $d$: from top to bottom, $d=2,3,4,5,6,7$ and $8$.}
\label{fig:g}
\end{figure}

\begin{thm}
\label{cor_3}
Let $d\geq1$ and let $N$ be of the form $N={d+L \choose d}$. Then, $\X$ is a homogeneous projection DPP in $\mathbb{S}^{2d}$ with kernel

\begin{equation*}
K_{*}^{(N)}(p,q) = \frac{N}{Vol(\S{2d})} \frac{\left( 1 + \left\langle z,w \right\rangle \right)^{L}}{(1 + \|z\|^{2})^{\frac{L}{2}} (1 + \|w\|^{2})^{\frac{L}{2}} }
\end{equation*}

\noindent where $z=\varphi_g^{-1}(\Pi_d(p)),\;w=\varphi_g^{-1}(\Pi_d(q))$.
\end{thm}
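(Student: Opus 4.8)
The plan is to obtain this statement as a synthesis of the three preceding propositions rather than through a fresh computation: Proposition \ref{prop_5} supplies the general form of the kernel for an arbitrary admissible $g$, Proposition \ref{prop_3} guarantees that the specific choice $g=g_d$ renders the process homogeneous, and Proposition \ref{prop_6} fixes the normalizing constant. First I would record that, by Proposition \ref{prop_3}, the process $\X=\mathfrak{X}_*^{(N,d,g_d)}$ has $K_*^{(N)}(p,p)$ constant, so it is homogeneous in the sense of Definition \ref{defn:kernel:homogeneus}.

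The key algebraic observation is that homogeneity alone pins down the radial factor $R$. Setting $q=p$ (hence $w=z$ and $\langle z,z\rangle=\|z\|^2$) in the formula of Proposition \ref{prop_5} gives
\[
K_*^{(N)}(p,p)=\frac{N d!}{\pi^{d}2^{2d}}\,(1+\|z\|^2)^{L}\,R(\|z\|)^2.
\]
As $p$ ranges over $\S{2d}$ the point $z=\varphi_{g}^{-1}(\Pi_d(p))$ ranges over $\C^d$, so $t=\|z\|$ takes every value in $(0,\infty)$. Constancy of the left--hand side therefore forces $(1+t^2)^{L}R(t)^2$ to be constant, i.e. there is a constant $C>0$ with $R(t)=C\,(1+t^2)^{-L/2}$ for all $t>0$. (Equivalently, one could verify this directly by feeding the beta--function identity of Proposition \ref{prop_3} into the explicit expression for $R$ and differentiating, but the homogeneity route avoids that computation.)

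Substituting $R(\|z\|)=C(1+\|z\|^2)^{-L/2}$ and $R(\|w\|)=C(1+\|w\|^2)^{-L/2}$ back into the kernel of Proposition \ref{prop_5}, the radial factors collapse and I obtain
\[
K_*^{(N)}(p,q)=\frac{N d!\,C^2}{\pi^{d}2^{2d}}\,\frac{(1+\langle z,w\rangle)^{L}}{(1+\|z\|^2)^{L/2}(1+\|w\|^2)^{L/2}},
\]
which already exhibits the asserted dependence on $z$ and $w$. It remains only to identify the constant prefactor, and rather than computing $C$ explicitly I would evaluate on the diagonal: the displayed formula gives $K_*^{(N)}(p,p)=Nd!\,C^2/(\pi^{d}2^{2d})$, while Proposition \ref{prop_6} asserts $K_*^{(N)}(p,p)=N/\mathrm{Vol}(\S{2d})$ for any homogeneous kernel. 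Equating the two yields $d!\,C^2/(\pi^{d}2^{2d})=1/\mathrm{Vol}(\S{2d})$ and completes the proof.

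There is no serious obstacle here: the genuine work — showing that a homogenizing function $g_d$ exists and is unique — has already been carried out in Proposition \ref{prop_3}. The only point demanding a little care is the factorization step, namely recognizing that once $R(t)\propto(1+t^2)^{-L/2}$, the product $R(\|z\|)R(\|w\|)$ depends on $(z,w)$ exactly through $(1+\|z\|^2)^{-L/2}(1+\|w\|^2)^{-L/2}$, so that the off--diagonal kernel inherits the clean shape claimed in the statement while the normalization is read off for free from Proposition \ref{prop_6}.
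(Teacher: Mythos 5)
Your proof is correct, but it takes a genuinely different route from the paper's. The paper proceeds by direct computation: it substitutes the expression for $g'(t)$ coming from the differential equation \eqref{eq:gedo} (established in the proof of Proposition \ref{prop_3}) into the formula for $R(t)$ from Proposition \ref{prop_5}, obtaining explicitly $R(t)=\bigl(\sqrt{d\mathrm{B}(d,d)}\,(1+t^2)^{L/2}\bigr)^{-1}$, and then identifies the resulting prefactor $\frac{Nd!}{\pi^d 2^{2d} d\mathrm{B}(d,d)}$ with $N/Vol(\S{2d})$ via Legendre's duplication formula and the volume formula \eqref{eq:volS}. You instead deduce abstractly that homogeneity forces $(1+t^2)^{L}R(t)^2$ to be constant (using that $t=\|z\|$ sweeps all of $(0,\infty)$ and that $R>0$, so the square root is unambiguous), hence $R(t)=C(1+t^2)^{-L/2}$, and you read off the normalizing constant for free from Proposition \ref{prop_6} by evaluating on the diagonal, where the displayed fraction equals $1$. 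Your argument avoids both the explicit ODE substitution and the Gamma-function identity, at the cost of never producing the explicit value $C=1/\sqrt{d\mathrm{B}(d,d)}$; since nothing downstream in the paper uses that explicit value (only the ratio $|K_*^{(N)}(p,q)|/K_*^{(N)}(p,p)$ is needed later), this is a clean and arguably more economical derivation. The one point worth stating explicitly is that Proposition \ref{prop_3} is what licenses the constancy of $K_*^{(N)}(p,p)$ for the specific $g=g_d$, and you do invoke it correctly, so there is no circularity.
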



\section{The expected Riesz s-energy of the generalized spherical ensemble}
\label{section_energy}

The Riesz $s$--energy of a set on points $\omega_{N} = \{ x_{1}, \ldots  ,x_{N} \}$ on the sphere $\mathbb{S}^{d}$ with $s > 0$ is 
\begin{equation*}
\E{s}(\omega_{N}) = \displaystyle\sum_{i \neq j} \frac{1}{\| x_{i} - x_{j} \|^{s}}
\end{equation*}

An interesting problem regarding this energy consists on looking for the minimal value that the energy can reach for a set of $N$ points on a sphere of dimension $d$.
The asymptotic behavior (for $N\to\infty$) has been extensively studied.
In \cite{Brauchart, 10.2307/117605} it was proved that for $d > 2$ and $0<s<d$ there exist constants $c>C> 0$ (depending only on $d$ and $s$) such that
\begin{equation}\label{eq:bounds}
\min_{\omega_N}\left(\E{s}(\omega_{N})\right) 
\leq 
V_{s}(\mathbb{S}^{d})N^{2} - CN^{1 + \frac{s}{d}} + o(N^{1 + \frac{s}{d}}),
\end{equation}
\begin{equation*}
\min_{\omega_N}\left(\E{s}(\omega_{N})\right) 
\geq 
V_{s}(\mathbb{S}^{d})N^{2} - cN^{1 + \frac{s}{d}} + o(N^{1 + \frac{s}{d}}),
\end{equation*}
where $V_{s}(\mathbb{S}^{d})$ is the continuous s-energy for the normalized Lebesgue measure,
\begin{equation*}
V_{s}(\mathbb{S}^{d})
=
\frac{1}{Vol(\mathbb{S}^{d})^{2}} \displaystyle\int_{p,q \in \mathbb{S}^{d}} \frac{1}{\|p-q\|^{s}}dpdq
=
2^{d-s-1} \frac{\Gamma \left( \frac{d+1}{2} \right) \Gamma \left( \frac{d-s}{2} \right)}{\sqrt{\pi} \Gamma \left(  d- \frac{s}{2} \right)}.
\end{equation*}
Finding precise bounds for the constants in \eqref{eq:bounds} is an important open problem and has been addressed by several authors, see \cite{BHS2012,LB2015} for some very precise conjectures and \cite{Brauchart2015293} for a survey paper. 
A strategy for the upper bound is to take collections of random points in $\S{d}$ coming from DPPs and then compute the expected value of the energy (which is of course greater than or equal to the minimum possible value). 
This was done in the case of the harmonic ensemble, obtaining the best bounds known until date for general $s$ and even $d$.
Namely,
\begin{equation}\label{eq:BMOCliminf}
\begin{split}
\min_{\omega_N}\left(\E{s}(\omega_{N})\right) 
 \leq &
 V_{s}(\mathbb{S}^{d})N^{2} -
\frac{ 2^{s-\frac{s}{d}}V_s(\S{d}) d\,\Gamma\left( 1+\frac{d}{2} \right)   \Gamma\left( \frac{1+s}{2} 
\right)\Gamma\left(d-\frac{s}{2}\right) }
{ \sqrt{\pi} \Gamma\left( 1+\frac{s}{2} \right) \Gamma\left( 1+\frac{s+d}{2} 
\right)\left(d!\right)^{1-\frac{s}{d}}} N^{1 + \frac{s}{d}} \\
& + o(N^{1 + \frac{s}{d}}).
\end{split}
\end{equation}

In the case of odd-dimensional spheres, the best bound is obtained from a point process that is not determinantal but follows from a DPP (the projective ensemble described above), see \cite{BE2018}. For example the expression for the $2$-energy that one gets with this process is:
\begin{equation}\label{eq:BE}
\begin{split}
& \min_{\omega_N}\left(\E{2}(\omega_{N})\right) 
 \leq 
 V_{2}(\mathbb{S}^{d})N^{2} - \\
& \qquad
\frac{3^{1 - \frac{2}{2d+1}} (2d-1)^{1 - \frac{2}{2d+1}} (2d+1) \Gamma\left( d- \frac{1}{2} \right)^{2 - \frac{2}{2d+1}}}{2^{4 - \frac{2}{2d+1}} (d!)^{2 - \frac{4}{2d+1}}} N^{1 + \frac{2}{d}} + o(N^{1 + \frac{2}{d}}).
\end{split}
\end{equation}

\noindent For the generalized spherical ensemble, our result can be written as follows.
\begin{thm}
\label{cor_1}
Let $N$ be of the form $N = {d+L \choose d}$, $d \geq 1$ and $0 < s < 2d$, then

\begin{multline*}
\Esp{x\sim\X}{\E{s}(\omega_{N})}
\leq  
 V_{s}(\mathbb{S}^{2d})N^{2} \\
- 
\frac{Vol(\mathbb{S}^{2d-1}) \left( \frac{(2d-s) \left( 1 - \frac{1}{d} \right)}{2e} \right)^{d - \frac{s}{2}}}{(2d-s)Vol(\S{2d}))(d!)^{1-\frac{s}{2d}}}
 \left(1-\frac{e^{-1+\frac1{2d}}}{2\sqrt{1-\frac1{2d}}}\right)
 N^{1 + \frac{s}{2d}} 
+ o(N^{1 + \frac{s}{2d}}).
\end{multline*}

\end{thm}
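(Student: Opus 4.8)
The plan is to start from the determinantal structure. For a projection DPP the two--point correlation function is $\rho_2(p,q)=K_*^{(N)}(p,p)K_*^{(N)}(q,q)-|K_*^{(N)}(p,q)|^2$ (see \cite{Hough_zerosof}), so using that the process is homogeneous with $K_*^{(N)}(p,p)=N/\mathrm{Vol}(\S{2d})$ (Proposition \ref{prop_6} and Theorem \ref{cor_3}), one obtains the exact identity
\begin{equation*}
\Esp{x\sim\X}{\E{s}(\omega_N)}=\int_{\S{2d}}\int_{\S{2d}}\frac{\rho_2(p,q)}{\|p-q\|^s}\,dp\,dq=V_s(\S{2d})N^2-I,\qquad I:=\int_{\S{2d}}\int_{\S{2d}}\frac{|K_*^{(N)}(p,q)|^2}{\|p-q\|^s}\,dp\,dq,
\end{equation*}
where the first term equals $V_s(\S{2d})N^2$ by the definition of the continuous energy. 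Proving the theorem thus reduces to bounding $I$ from below by the announced $N^{1+s/(2d)}$ term.

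To analyze $I$ I would push everything to $\C^d$ via $z=\varphi_g^{-1}(\Pi_d(p))$ and $w=\varphi_g^{-1}(\Pi_d(q))$. Two clean formulas drive the computation: from Theorem \ref{cor_3}, $|K_*^{(N)}(p,q)|^2=\frac{N^2}{\mathrm{Vol}(\S{2d})^2}\left(\frac{|1+\langle z,w\rangle|^2}{(1+\|z\|^2)(1+\|w\|^2)}\right)^{L}$, the bracket being $\cos^2$ of the Fubini--Study distance between $[1:z]$ and $[1:w]$; and a direct computation with \eqref{eq:stg} gives the chordal distance $\|p-q\|^2=\frac{4\|\varphi_g(z)-\varphi_g(w)\|^2}{(1+\|\varphi_g(z)\|^2)(1+\|\varphi_g(w)\|^2)}$. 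Recording the Jacobian of $\phi=\Pi_d^{-1}\circ\varphi_g$ (the conformal factor $(2/(1+\|\cdot\|^2))^{2d}$ of the stereographic projection times the radial Jacobian $g'(r)(g(r)/r)^{2d-1}$ of $\varphi_g$) turns $I$ into an explicit integral over $\C^d\times\C^d$. Since the whole integrand is invariant under the diagonal action of $U(d)$, I would fix the direction of $z$ and reduce the genuine variables, leaving essentially the latitude $t=\|z\|$ and the position of $w$ relative to $z$.

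The asymptotics as $N\to\infty$ (equivalently $L\to\infty$, since $N=\binom{d+L}{d}\sim L^d/d!$) are governed by concentration on the diagonal: the factor $\cos^{2L}$ decays away from $w=z$, so a Laplace/Watson expansion around $w=z$ extracts the scale $L^{-(d-s/2)}$ (the transverse integral against $\|p-q\|^{-s}$ converges precisely because $0<s<2d$) and reduces the leading constant to a one--dimensional integral in the latitude $t$, weighted by the density built from $g$. Here the defining relation of $g$ from Proposition \ref{prop_3}, $\mathrm{I}_{g^2/(1+g^2)}(d,d)=(t^2/(1+t^2))^d$, would be used to rewrite that latitude integral.

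The exact constant produced this way is a combination of $\Gamma$-- and Beta--values coming from $g$, which is only defined implicitly. To turn it into the explicit elementary expression in the statement I would invoke the inequality for the incomplete beta function proved in Section \ref{sec:beta}: it provides an explicit lower bound for the latitude integrand, and the resulting elementary integral yields the factor $\big(\frac{(2d-s)(1-1/d)}{2e}\big)^{d-s/2}$ together with the correction $\big(1-\frac{e^{-1+1/(2d)}}{2\sqrt{1-1/(2d)}}\big)$, the transcendental constant $e$ entering through the maximization inherent in that bound. Finally I would absorb the subleading contributions (the off--diagonal tail and the next terms of the Laplace expansion) into $o(N^{1+s/(2d)})$ and rewrite $L$ in terms of $N$. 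The main obstacle is twofold and lies in these last steps: making the Laplace asymptotics rigorous with explicit, uniform error control — the integrand is genuinely anisotropic for $d\ge 2$, since the Fubini--Study form and the chordal distance have different principal directions, so the transverse integral is not a plain Gaussian — and then converting the resulting non-elementary constant into the clean closed form through the new incomplete-beta inequality without degrading the exponent.
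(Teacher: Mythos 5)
Your opening reduction is exactly the paper's: the determinantal identity gives $\Esp{x\sim\X}{\E{s}(\omega_{N})}=V_s(\S{2d})N^2-I$ with $I=\int\int |K_*^{(N)}(p,q)|^2\|p-q\|^{-s}\,dp\,dq$, so everything hinges on a lower bound for $I$ of order $N^{1+s/(2d)}$ with the stated constant. From that point on your plan is not the paper's, and — as you yourself concede in your last sentences — its decisive steps are not carried out. The constant in the statement is not the exact second-order coefficient that a rigorous Laplace/Watson expansion would deliver; it is the output of a specific chain of crude, fully elementary estimates. In the paper one restricts $I$ to the region $\{\|p-q\|\le\tau,\ p_{2d+1}\le 1/\sqrt d\}$, uses the pointwise bound $\bigl(|K_*^{(N)}(p,q)|/K_*^{(N)}(p,p)\bigr)^2\ge (1-\|\phi(p)-\phi(q)\|^2)^L$ together with the Lipschitz estimate $\|\phi(p)-\phi(q)\|\le(\|p-q\|+\|p-q\|^3)\sup\|D\phi\|$, evaluates the resulting elementary integral, and only at the very end sets $\tau=\sqrt{C/L}$ and optimizes over $C$; the factor $\bigl(\tfrac{(2d-s)(1-1/d)}{2e}\bigr)^{d-s/2}$ comes from $\bigl(1-\tau^2\tfrac{(1+\tau^2)^2}{1-(1/\sqrt d+\tau)^2}\bigr)^L\to e^{-dC/(d-1)}$ and the optimal $C=(d-1)(1-\tfrac{s}{2d})$, not from the beta inequality. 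The factor $\bigl(1-\tfrac{e^{-1+1/(2d)}}{2\sqrt{1-1/(2d)}}\bigr)$ is a measure-concentration lower bound (Proposition \ref{prop:enesima}) for the volume of the cap $\{p_{2d+1}\le 1/\sqrt d\}$, to which the integral must be restricted because $\|D\phi\|$ blows up near the north pole; your plan never makes this restriction, so this factor cannot arise by the mechanism you describe.

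The second concrete misattribution concerns Theorem \ref{th:ineqbeta}. You propose to use it to lower-bound a latitude integrand after an exact expansion; in the paper it serves a different and essential purpose: via Lemma \ref{lem_4} it shows that of the two singular values of $D\phi(p)$ the one in the $2d-1$ directions orthogonal to the meridian dominates the radial one, which is precisely what permits the explicit monotone bound on the operator norm $\|D\phi\|$ in Corollary \ref{cor_2}. Without some substitute for that step your Lipschitz control of $\phi$ (and hence of $1-\|\phi(p)-\phi(q)\|^2$) is unavailable. Since you explicitly flag the rigorous anisotropic Laplace analysis and the conversion of its non-elementary constant into the stated closed form as unresolved obstacles, the entire quantitative content of the theorem — the explicit coefficient of $N^{1+s/(2d)}$ — remains unproved in your proposal.
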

If we compare our result with that from \eqref{eq:BMOCliminf}, we note that our bound is worst (see figure \ref{fig:Comp_har_sphe}). 
Nevertheless, the points coming from this generalized spherical ensemble do respect the asymptotic of the minimal energy, getting the correct exponent $1+s/d$ for the second term in the expansion.

\begin{figure}[htp]
\includegraphics[width=1\textwidth]{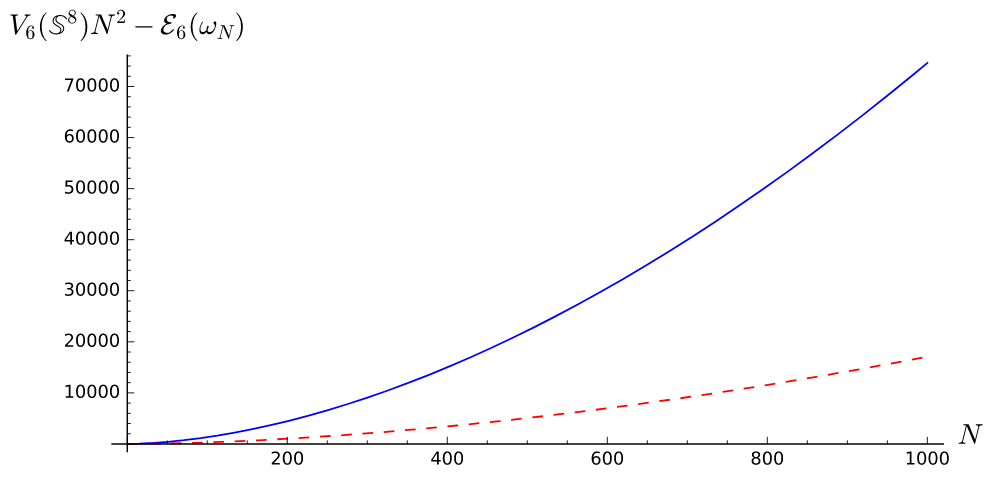}
\caption{Difference between the $6$-energy of the harmonic ensemble (blue solid line) and the $6$-energy of the generalized spherical ensemble (red dashed line) in $\mathbb{S}^{8}$. The harmonic ensemble shows a better behavior regarding energy asymptotics.}
\label{fig:Comp_har_sphe}
\end{figure}

The technical version of Theorem \ref{cor_1} is a bound of the expected value of Riesz s-energy for points drawn from $\X$.

\begin{thm}
\label{th_2}
Let $d\geq1$ and let $N$ be of the form $N = {d+L \choose d}$. Then, for $\tau > 0$ such that $\tau < 1-1/\sqrt{d}$ we have:

\begin{multline*}
 \Esp{x\sim\X}{\E{s}(\omega_{N})}
 \leq  N^{2} V_{s}(\mathbb{S}^{2d})- \frac{N^2 Vol(\mathbb{S}^{2d-1})}{(2d-s) Vol(\mathbb{S}^{2d})}\times  \\
 \left(1-\frac{\tau^2}{4}\right)^{d-1}
 \tau^{2d-s}
 \left(1- \tau^{2} \frac{(1+\tau^{2})^2 }{1 - \left(\frac1{\sqrt d}+ \tau\right)^2} \right)^{L}
 \left(1-\frac{e^{-1+\frac1{2d}}}{2\sqrt{1-\frac1{2d}}}\right)
\end{multline*}

\end{thm}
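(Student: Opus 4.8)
My plan is to reduce the expected energy to a single ``self-energy'' integral of the kernel and then bound that integral from below on a near-diagonal region. For a projection DPP the expected value of $\E{s}$ is the integral of $\|p-q\|^{-s}$ against the two-point correlation $\rho_2(p,q)=K_*^{(N)}(p,p)K_*^{(N)}(q,q)-|K_*^{(N)}(p,q)|^2$. Since the process is homogeneous, Proposition \ref{prop_6} gives $K_*^{(N)}(p,p)=N/\mathrm{Vol}(\S{2d})$, so the first term integrates to $N^2V_s(\S{2d})$ by the very definition of $V_s$, and I obtain
\[
\Esp{x\sim\X}{\E{s}(\omega_N)}=N^2V_s(\S{2d})-J,\qquad J=\int_{\S{2d}}\int_{\S{2d}}\frac{|K_*^{(N)}(p,q)|^2}{\|p-q\|^s}\,dq\,dp .
\]
Everything then rests on a lower bound for $J$, which I get by discarding all of the (nonnegative) integrand except a favorable piece.

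I would localize in two ways. First, using Theorem \ref{cor_3} I write $|K_*^{(N)}(p,q)|^2=\frac{N^2}{\mathrm{Vol}(\S{2d})^2}\,u(z,w)^L$ with $u(z,w)=\frac{|1+\langle z,w\rangle|^2}{(1+\|z\|^2)(1+\|w\|^2)}\in[0,1]$ and $u(z,z)=1$, where $z=\varphi_g^{-1}(\Pi_d(p))$, $w=\varphi_g^{-1}(\Pi_d(q))$. Second, I restrict $p$ to the region $\mathcal R=\{p:\|\Pi_d(p)\|=g(\|z\|)\le\sqrt d\}$ and, for each such $p$, restrict $q$ to the cap $\{\|p-q\|\le\tau\}$. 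On the cap the metric part is rotation invariant on the sphere and computes exactly: writing $r=\|p-q\|=2\sin(\theta/2)$,
\[
\int_{\|p-q\|\le\tau}\frac{dq}{\|p-q\|^s}=\mathrm{Vol}(\S{2d-1})\int_0^\tau r^{2d-1-s}\Bigl(1-\tfrac{r^2}{4}\Bigr)^{d-1}dr\ \ge\ \mathrm{Vol}(\S{2d-1})\Bigl(1-\tfrac{\tau^2}{4}\Bigr)^{d-1}\frac{\tau^{2d-s}}{2d-s},
\]
which produces the factors $\mathrm{Vol}(\S{2d-1})$, $(1-\tau^2/4)^{d-1}$, $\tau^{2d-s}$ and $1/(2d-s)$.

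The technical heart is a uniform lower bound $u(z,w)\ge 1-\tau^2\frac{(1+\tau^2)^2}{1-(1/\sqrt d+\tau)^2}$ valid for $p\in\mathcal R$ and $\|p-q\|\le\tau$. I would begin from the exact identity $1-u=\frac{\|z-w\|^2+(\|z\|^2\|w\|^2-|\langle z,w\rangle|^2)}{(1+\|z\|^2)(1+\|w\|^2)}$ and bound the Cauchy--Schwarz defect by $\|z\|^2\|w\|^2-|\langle z,w\rangle|^2\le\|z\|^2\|z-w\|^2$ (it equals $\|z\|^2\|P_{z^\perp}w\|^2$ and $P_{z^\perp}w=P_{z^\perp}(w-z)$), which collapses everything to $1-u\le\frac{\|z-w\|^2}{1+\|w\|^2}$. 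The remaining, genuinely delicate step is to transport the spherical constraint $\|p-q\|\le\tau$ into bounds on $\|z-w\|$ and $\|w\|$: since $\|p-q\|^2=\frac{4\|\varphi_g(z)-\varphi_g(w)\|^2}{(1+g(\|z\|)^2)(1+g(\|w\|)^2)}$, this is a distortion estimate for the radial map $\varphi_g^{-1}$, i.e. a quantitative control on the implicitly defined function $g$ of Proposition \ref{prop_3} over $\mathcal R$. This is where I expect the main obstacle to lie, and where the bound $g(\|z\|)\le\sqrt d$ (hence the hypothesis $\tau<1-1/\sqrt d$, keeping $1-(1/\sqrt d+\tau)^2>0$) enters to control both the norms and the derivatives of $g^{-1}$.

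Finally I estimate $\mathrm{Vol}(\mathcal R)$. Changing variables by $dp=|\mathrm{Jac}\,\phi(z)|\,dz$ and using Proposition \ref{prop_3}, the relative measure of $\{g(\|z\|)\le\sqrt d\}$ equals the regularized incomplete beta value $\mathrm I_{\frac{d}{d+1}}(d,d)$, so its complement has relative measure $\mathrm I_{\frac{1}{d+1}}(d,d)$; bounding this tail is exactly the inequality proved in Section \ref{sec:beta}, giving $\mathrm{Vol}(\mathcal R)/\mathrm{Vol}(\S{2d})\ge 1-\frac{e^{-1+1/(2d)}}{2\sqrt{1-1/(2d)}}$. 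Assembling the three pieces --- the beta-tail factor for $\mathrm{Vol}(\mathcal R)$, the exact cap integral, and the uniform bound on $u$ raised to the power $L$ --- into $J\ge\frac{N^2}{\mathrm{Vol}(\S{2d})}\frac{\mathrm{Vol}(\S{2d-1})}{2d-s}(1-\tfrac{\tau^2}{4})^{d-1}\tau^{2d-s}\bigl(1-\tau^2\tfrac{(1+\tau^2)^2}{1-(1/\sqrt d+\tau)^2}\bigr)^L\bigl(1-\tfrac{e^{-1+1/(2d)}}{2\sqrt{1-1/(2d)}}\bigr)$ yields the claim. The two energy/measure integrals are essentially exact computations; the distortion control of $\varphi_g$ in the third paragraph is the step I expect to require the most care.
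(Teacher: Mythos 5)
Your overall architecture matches the paper's (reduce to a lower bound for the near--diagonal ``self--energy'' integral, restrict $p$ to a sub--level set of $p_{2d+1}$ and $q$ to a cap of radius $\tau$, compute the cap integral exactly, and multiply by a uniform lower bound for the kernel ratio and by the volume of the restricted region). But the step you yourself flag as ``the main obstacle'' is a genuine gap, and it is precisely where the paper does all of its real work. To convert $\|p-q\|\leq\tau$ into a bound on $1-u\leq\|z-w\|^{2}/(1+\|w\|^{2})$ you must control the Lipschitz constant of $\varphi_g^{-1}\circ\Pi_d$ (or, as the paper does, of $\phi=\theta\circ\varphi_g^{-1}\circ\Pi_d$). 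The derivative of this map has two distinct singular values --- a tangential one and a radial one involving $(g^{-1})'$, where $g$ is only defined implicitly through $\mathrm{I}_{g^2/(1+g^2)}(d,d)=(t^2/(1+t^2))^d$ --- and the entire point of Theorem \ref{th:ineqbeta} is to prove (via Lemma \ref{lem_4} and Proposition \ref{prop_9}) that the radial singular value is dominated by the tangential one, so that $\|D\phi\|$ has the explicit monotone form used in Corollary \ref{cor_2} and yields exactly the factor $\bigl(1-\tau^{2}(1+\tau^{2})^{2}/(1-(1/\sqrt d+\tau)^{2})\bigr)^{L}$ (the $(1+\tau^2)^2$ coming from the arclength estimate $2\arcsin(x/2)\leq x+x^3$ of Lemma \ref{lem_2}). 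You have not supplied any substitute for this, so the central estimate of the theorem remains unproved in your argument.

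Moreover, you misidentify the role of the incomplete--beta inequality of Section \ref{sec:beta}: it is \emph{not} a tail bound for $\mathrm{I}_x(d,d)$ and it is not used to estimate the volume of the good region. In the paper that volume bound is Proposition \ref{prop:enesima}, a concentration--of--measure estimate for spherical caps (Milman--Schechtman) combined with Gautschi's inequality, applied to the set $\{p_{2d+1}\leq 1/\sqrt d\}$. Your region $\mathcal R=\{\|\Pi_d(p)\|\leq\sqrt d\}$ is the different set $\{p_{2d+1}\leq(d-1)/(d+1)\}$ (since $\|\Pi_d(p)\|^{2}=(1+p_{2d+1})/(1-p_{2d+1})$); its relative volume is indeed $\mathrm{I}_{d/(d+1)}(d,d)$, but the claim that this exceeds $1-\tfrac{e^{-1+1/(2d)}}{2\sqrt{1-1/(2d)}}$ does not follow from Theorem \ref{th:ineqbeta} and is not true uniformly (e.g.\ it fails for $d=1$, where the relative volume is $1/2$), so this step also needs to be redone with the paper's region and cap estimate. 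Finally, note that the threshold $1/\sqrt d$ in the hypothesis $\tau<1-1/\sqrt d$ is the bound $\epsilon$ on the \emph{last coordinate} $p_{2d+1}$, not a bound on $\|\Pi_d(p)\|$; conflating the two is what leads you to the wrong region.
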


Theorem \ref{cor_1} will follow from Theorem \ref{th_2} by choosing the optimal value for $\tau$.


\section{An inequality regarding the incomplete Beta function}
\label{sec:beta}


In order to prove Theorem \ref{cor_1} we will need an inequality regarding Euler's incomplete Beta function. This inequality, which is sharp and might be of independent interest, is stated and proved in this section.
Let us recall the definition of Euler's incomplete Beta function (and its regularized version):
\begin{equation}\label{eq:betas}
 \mathrm{B}_x(a,b)=\int_0^xt^{a-1}(1-t)^{b-1}\,dt,\quad \mathrm{I}_x(a,b)=\frac{\mathrm{B}_x(a,b)}{\mathrm{B}(a,b)},
\end{equation}
which satisfies $\mathrm{I}_1(a,b)=1$.

\begin{thm}\label{th:ineqbeta}
Let $d\geq1$ and $s\in[0,1]$. Then,
\[
 d\mathrm{B}_{s}(d,d) \sqrt{1 - (\mathrm{I}_{s}(d,d))^{\frac{1}{d}}} \geq s^{d}(1-s)^{d},
 \]
 with an equality if and only if $s=0,1$.
\end{thm}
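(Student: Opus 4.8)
The plan is to reduce the statement to a single sharp one–variable inequality and then control it by separating the regimes $s\to 0$ and $s\to 1$, where two different elementary tools are each sharp. Since both sides are non-negative on $[0,1]$ (indeed $\mathrm{I}_s(d,d)\in[0,1]$ gives $1-(\mathrm{I}_s(d,d))^{1/d}\ge 0$), the assertion is equivalent to its square,
\[
 d^2\,\mathrm{B}_s(d,d)^2\left(1-(\mathrm{I}_s(d,d))^{1/d}\right)\ \ge\ \left(s(1-s)\right)^{2d}.
\]
Writing $\beta=\mathrm{B}(d,d)$ and $\mathrm{I}_s=\mathrm{B}_s(d,d)/\beta$, and using the odds substitution $\omega=s/(1-s)$, one has the clean representations $\mathrm{B}_s(d,d)=\int_0^{\omega}x^{d-1}(1+x)^{-2d}\,dx$ and $\left(s(1-s)\right)^{d}=\omega^{d}(1+\omega)^{-2d}$, which turns the claim into a single inequality for $\omega\in(0,\infty)$. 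The endpoints $s=0,1$ (i.e. $\omega=0,\infty$) make both sides vanish, which already yields the equality cases; the content is the strict inequality on the interior.

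A natural route is to show that the ratio $\Lambda(s)=\frac{\left(s(1-s)\right)^{2d}}{\mathrm{B}_s(d,d)^2\left(1-(\mathrm{I}_s(d,d))^{1/d}\right)}$ never exceeds its boundary value. Using $\mathrm{B}_s(d,d)\sim s^{d}/d$ and $\mathrm{I}_s(d,d)\to 0$ as $s\to 0$ one computes $\lim_{s\to 0^+}\Lambda(s)=d^2$, so it suffices to prove $\Lambda(s)\le d^2$ throughout, for instance by establishing that $\Lambda$ (or $\log\Lambda$) is non-increasing. Two sharp ingredients feed this. First, the elementary bound $1-x^{1/d}\ge (1-x)/d$ for $x\in[0,1]$ (immediate from monotonicity, with equality only at $x=1$), which is asymptotically sharp as $s\to 1$, since there $1-(\mathrm{I}_s)^{1/d}\sim (1-\mathrm{I}_s)/d=\mathrm{I}_{1-s}/d$. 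Second, for $d\ge 2$ the integration-by-parts identity $\mathrm{B}_s(d,d)=\tfrac1d s^{d}(1-s)^{d-1}+\tfrac{d-1}{d}\int_0^{s}\tau^{d}(1-\tau)^{d-2}\,d\tau$, whose second term supplies exactly the correction needed near $s=0$, where $1-(\mathrm{I}_s)^{1/d}\to 1$ and no help from the square root is available (the case $d=1$ reduces to the elementary $\sqrt{1-s}\ge 1-s$).

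The hard part will be the regime $s\to 0$, where the inequality is tight to high order: expanding both sides shows that their difference vanishes like $s^{2d+1}$ with coefficient proportional to $\tfrac{4d}{d+1}-(d\beta)^{-1/d}$, and since $(d\beta)^{1/d}\to \tfrac14$ while $\tfrac{d+1}{4d}\to\tfrac14$ as $d\to\infty$ (by Gautschi-type estimates for $\beta=\mathrm{B}(d,d)=\Gamma(d)^2/\Gamma(2d)$), this coefficient stays positive but degenerates. Consequently any one-step bound that discards either the second-order term of $\mathrm{B}_s(d,d)$ or the slack in $1-x^{1/d}\ge(1-x)/d$ loses precisely the deciding contribution and fails for large $d$; the argument must retain second-order information, which I expect to force a split into a neighbourhood of $s=0$ (handled by the integration-by-parts identity together with a sharp lower bound for $\beta$) and its complement (handled by the bound on $1-x^{1/d}$ together with $\mathrm{B}_s(d,d)\ge \beta/2$ for $s\ge 1/2$ and the symmetry $\mathrm{I}_s+\mathrm{I}_{1-s}=1$). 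The equality assertion then follows because every discarded term is strictly positive on $(0,1)$ and both sides vanish at $s=0,1$.
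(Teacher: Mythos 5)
Your write-up is a strategy, not a proof: the two decisive steps are announced but never carried out. You reduce correctly to the squared inequality $d^2\mathrm{B}_s(d,d)^2\bigl(1-\mathrm{I}_s(d,d)^{1/d}\bigr)\ge (s(1-s))^{2d}$, observe that the ratio $\Lambda(s)$ tends to the extremal value $d^2$ as $s\to0^+$, and then say it ``suffices to prove $\Lambda(s)\le d^2$, for instance by establishing that $\Lambda$ is non-increasing'' --- but no monotonicity argument is given. You then candidly explain why each of your two proposed ingredients (the bound $1-x^{1/d}\ge(1-x)/d$ and the integration-by-parts identity for $\mathrm{B}_s(d,d)$) fails on its own, and conclude that the argument must ``force a split into a neighbourhood of $s=0$ and its complement'' --- again without specifying the neighbourhood, the bound on each piece, or how the two regimes match. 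Even the positivity of the coefficient $\tfrac{4d}{d+1}-(d\mathrm{B}(d,d))^{-1/d}$, which your own expansion identifies as the deciding quantity near $s=0$, is asserted rather than proved; establishing it requires a quantitative lower bound for $d\mathrm{B}(d,d)$ uniform in $d$, and since this coefficient degenerates as $d\to\infty$ the matching of the two regimes is genuinely delicate. So as it stands there is a gap precisely at the point you yourself flag as ``the hard part.''

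For contrast, the paper avoids all asymptotic matching by a purely algebraic monotonicity argument: setting $Q(s)=\mathrm{B}_s(d,d)^{-1}$ and $A(s)=s^{2d}(1-s)^{2d}/d^2$, the claim is equivalent to $f(s)=Q(s)\,(1-Q(s)^2A(s))^d\ge 1/\mathrm{B}(d,d)=f(1)$, which follows once $f'\le0$ on $(0,1)$; the derivative condition reduces to a quadratic inequality in $\mathrm{B}_s(d,d)$, and completing the square reduces that to Lemma \ref{lem:techbeta}, whose proof is one more differentiation ending in the exact identity $s^2(1-s)^2(4+8d)\ge0$. If you want to salvage your route, the most promising move is to actually differentiate $\Lambda$ (or $\log\Lambda$) and check whether the resulting inequality closes algebraically, as it does for the paper's $f$; the regime-splitting you anticipate may then turn out to be unnecessary.
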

Before proving Theorem \ref{th:ineqbeta} we state a technical lemma:
\begin{lem}\label{lem:techbeta}
	The function 
	\[
	f(s)=dB_s(d,d)+s^d(1-s)^d\left(d-2ds-\sqrt{d^2(1-2s)^2+1+2d}\right)
	\]
	satisfies $f(s)\geq0$ for $s\in(0,1)$.
\end{lem}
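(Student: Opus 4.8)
The plan is to show that $f$ vanishes at the left endpoint and is nondecreasing, so that $f(s)\ge f(0)=0$ on the whole interval. Since $\mathrm{B}_0(d,d)=0$ and $s^d(1-s)^d$ vanishes at $s=0$, we have $f(0)=0$, and everything reduces to proving $f'(s)\ge0$ on $(0,1)$. To differentiate cleanly I would first change variables to $u=1-2s$ and set $w=\sqrt{d^2(1-2s)^2+1+2d}=\sqrt{d^2u^2+1+2d}$, observing that $d-2ds=du$, so that the parenthetical factor in $f$ becomes simply $h(s):=du-w$ (which, incidentally, is negative throughout since $w>d|u|$). Because $w^2=d^2u^2+1+2d$, a one-line differentiation yields the identity
\[
h'(s)=\frac{2d}{w}\,h(s),
\]
which is the computational key, as it strips the square root out of the derivative.

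Using $\tfrac{d}{ds}\mathrm{B}_s(d,d)=s^{d-1}(1-s)^{d-1}$ together with the factorizations $s^d(1-s)^d=s(1-s)\,s^{d-1}(1-s)^{d-1}$ and $s(1-s)=\tfrac{1-u^2}{4}$, the product rule collapses to
\[
f'(s)=d\,s^{d-1}(1-s)^{d-1}\,P(u),\qquad P(u)=1+h\!\left(u+\frac{1-u^2}{2w}\right).
\]
Since $d\ge1$ and $s^{d-1}(1-s)^{d-1}\ge0$ on $(0,1)$, it suffices to prove $P(u)\ge0$ for $u\in[-1,1]$. Substituting $h=du-w$ and simplifying, $P$ can be rewritten as $P(u)=\tfrac{1+u^2}{2}+du^2-\tfrac{u\,[\,d(2d+1)u^2+(2+3d)\,]}{2w}$, from which the case $u\le0$ is immediate: every term is then nonnegative. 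The remaining work is the range $u\in(0,1]$.

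For $u\in(0,1]$ the inequality $P(u)\ge0$ is equivalent, after multiplying by $2w>0$ and rearranging, to $w\,[\,1+(1+2d)u^2\,]\ge u\,[\,d(2d+1)u^2+(2+3d)\,]$, where both sides are strictly positive, so squaring is legitimate. Writing $v=u^2\in(0,1]$ and $w^2=d^2v+1+2d$, this becomes a cubic inequality in $v$, and the pleasant outcome (which I would confirm by a direct expansion) is the identity
\[
(d^2v+1+2d)\bigl[1+(1+2d)v\bigr]^2-v\bigl[d(2d+1)v+(2+3d)\bigr]^2=(2d+1)(v-1)^2\ge0.
\]
The degree-three term cancels identically and the remainder is a positive multiple of a perfect square, vanishing only at $v=1$, i.e. $s=0,1$. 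Hence $P\ge0$, so $f'\ge0$ and $f\ge0$ on $(0,1)$, consistent with the equality case $s=0,1$ of Theorem \ref{th:ineqbeta}.

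The main obstacle is entirely the algebra in this last display: one must expand carefully and trust that the top coefficients coincide (in the notation $A=2d+1$, $B=d^2$, $C=d(2d+1)$ one checks $A^2B=C^2$) while the lower-order terms reassemble into $(2d+1)(v-1)^2$. The only nonalgebraic subtlety is the sign bookkeeping when clearing the square root, which is precisely why I would split the analysis at $u=0$ and square only on $(0,1]$, where positivity of both sides is guaranteed, rather than squaring blindly.
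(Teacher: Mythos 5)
Your proposal is correct and follows essentially the same route as the paper's proof: both establish $f(0)=0$, factor $f'(s)=d\,s^{d-1}(1-s)^{d-1}\cdot(\text{your }P)$, observe that all terms are nonnegative on one side of $s=1/2$, and on the other side isolate the square-root term, square, and verify that the cubic terms cancel leaving a positive multiple of a perfect square (your $(2d+1)(v-1)^2$ is the paper's $s^2(1-s)^2(4+8d)$ up to the normalization by $2w$). The substitutions $u=1-2s$, $v=u^2$ and the identity $h'=\tfrac{2d}{w}h$ are a slightly tidier bookkeeping of the same computation, and all your algebraic identities check out.
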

\begin{proof}
	We note that $f(0)=0$ and we compute $f'(s)=ds^{d-1}(1-s)^{d-1}h(s)$ where
	\begin{multline*}
	h(s)=1+(1-2s)\left(d-2ds-\sqrt{d^2(1-2s)^2+1+2d}\right)+\\s(1-s)\left(-2+\frac{2d(1-2s)}{\sqrt{d^2(1-2s)^2+1+2d}}\right).
	\end{multline*}
	It suffices to prove that $h(s)\geq0$ for $s\in(0,1)$. Denoting $u(s)=d^2(1-2s)^2+1+2d$
	we have
	\[
	h(s)=1+d(1-2s)^2-2s(1-s)+(1-2s)\sqrt{u(s)}\left(\frac{2ds(1-s)-u(s)}{u(s)}\right).
	\]
	Since $2ds(1-s)<u(s)$ for all $s\in(0,1)$ we conclude that if $s>1/2$ then all the terms in $h(s)$ are positive and $h(s)\geq0$. It remains to prove $h(s)\geq0$ for $s\in(0,1/2)$. Then, the claim is equivalent to
	\[
	\left(1+d(1-2s)^2-2s(1-s)\right)^2\geq (1-2s)^2u(s)\left(\frac{2ds(1-s)-u(s)}{u(s)}\right)^2,
	\]
	that is
	\[
	u(s)\left(1+d(1-2s)^2-2s(1-s)\right)^2- (1-2s)^2(2ds(1-s)-u(s))^2\geq0.
	\]
	Expanding the terms, the last expression equals
	\[
	s^2(1  - s )^2 (4+8d)\geq0,
	\]
	which trivially holds, thus proving the lemma.
\end{proof}
\subsection{Proof of Theorem \ref{th:ineqbeta}}
Some elementary algebraic manipulations show that the inequality of the theorem is equivalent to:
\[
f(s)\geq \frac{1}{B(d,d)},\quad f(s)=Q(s)\,(1-Q(s)^2A(s))^d
\]
where $Q(s)=B_s(d,d)^{-1}$ and $A(s)=s^{2d}(1-s)^{2d}/d^2$. Now, note that
\[
f(1)=\frac{1}{B(d,d)},
\]
and hence it suffices to show that $f$ is a non--increasing function. We compute the derivative
\[
f'(s)=(1-Q(s)^2A(s))^{d-1}\left(Q'(s)-(1+2d)Q'(s)Q(s)^2A(s)-dQ(s)^3A'(s)\right),
\]
and hence it suffices to see that
\[
Q'(s)-(1+2d)Q'(s)Q(s)^2A(s)-dQ(s)^3A'(s)\leq0,\quad s\in(0,1),
\]
or equivalently we just have to see that
\[
dA'(s) B_s(d,d)+s^{d-1}(1-s)^{d-1}(B_s(d,d)^2-(1+2d)A(s))\geq0,\quad s\in(0,1).
\]
Computing the derivative $A'(s)$ and simplifying, this last inequality is equivalent to
\[
B_s(d,d)^2+2s^d(1-s)^d(1-2s)B_s(d,d)-\frac{1+2d}{d^2}s^{2d}(1-s)^{2d}\geq0,
\]
and hence also to
\begin{multline*}
\left(B_s(d,d)+s^d(1-s)^d(1-2s)\right)^2\geq s^{2d}(1-s)^{2d}(1-2s)^2+\frac{1+2d}{d^2}s^{2d}(1-s)^{2d}\\=
s^{2d}(1-s)^{2d}\left((1-2s)^2+\frac{1+2d}{d^2}\right),
\end{multline*}
which follows from Lemma \ref{lem:techbeta} after taking square roots. Theorem \ref{th:ineqbeta} now follows.


\section{Proofs of the main results}
\label{sec_proofs}


In order to prove the results presented in this paper, we will define two more functions.

\begin{defn}
\label{def_10}
Let $\theta:\mathbb{C}^{d}\to\mathbb{S}^{2d}$ be the mapping defined by $\theta (x) = \frac{(x,1)}{\|(x,1)\|}$.
We denote by $\phi(x) = (\theta \circ \varphi^{-1} \circ \Pi) (x)$, where $\varphi^{-1}$ and $\Pi$ are defined in Definition \ref{def_1}. 

Note that $\phi$ maps $\mathbb{S}^{2d}$ into its upper half.
\end{defn}

\subsection{Derivatives of the stereographic projection and other mappings}\label{sec:derivatives}

In this section we state an elementary lemma with the computation of the derivative of the stereographic projection and other mappings. 

Let $\mathbf{n}=(0,\ldots,0,1)\in\S{2d}$ be the north pole and let $p\in \mathbb{S}^{2d}$, $p\neq\pm\mathbf{n}$. It is useful to consider an orthonormal basis $\{\dot p^1,\ldots, \dot p^{2d-1},\dot p^{2d}\}$ of $p^\perp$ such that $\dot p^i\perp \mathbf{n}$ for $1\leq i\leq 2d-1$ and
\[
 \dot p^{2d}= \frac{\mathbf{n} -p_{2d+1} p}{ \sqrt{1-p_{2d+1}^{2}}}.
\]
\begin{lem}
\label{lem_3}

Let $\Pi$ be the stereographic projection \eqref{eq:stg}, let $\varphi_g$ be as defined in Definition \ref{def_1} and let $\theta$ be as defined in Definition \ref{def_10}. 
Then,

\begin{equation*}
\begin{split}
D\Pi(p)\dot{p} = &\left[ \frac{\dot{p_{1}}}{1-p_{2d+1} } + \frac{p_{1}\dot{p}_{2d+1}}{(1-p_{2d+1})^{2}},\ldots , \frac{\dot{p_{2d}}}{1-p_{2d+1} } + \frac{p_{2d}\dot{p}_{2d+1}}{(1-p_{2d+1})^{2}} \right], \\
D \varphi_g^{-1} (y) \dot{y} = &(g^{-1})'(\|y\|) \frac{y\Real{\left\langle y,\dot{y} \right\rangle}}{\| y \|^{2}}  + g^{-1}(\|y\|) \frac{\dot{y}\|y\| - y \frac{\Real{\left\langle y,\dot{y} \right\rangle}}{\| y \|}}{\|y\|^{2}}, \\
D\theta(x)\dot{x} = &\frac{(\dot{x},0)\|(x,1)\| - (x,1) \frac{\Real{\left\langle (x,1), (\dot{x},0)\right\rangle}}{\| (x,1) \|}}{\| (x,1) \|^{2}}.
\end{split}
\end{equation*}
Here we are denoting by $\Real{z}$ the real part of a complex number $z$.
\end{lem}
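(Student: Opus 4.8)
The plan is to treat all three identities as routine applications of the chain, product, and quotient rules; the only recurring non-trivial ingredient is the directional derivative of the Euclidean norm and of the normalization map, which I would isolate once and then reuse. Throughout I identify $\C^d$ with $\R^{2d}$ as in \eqref{eq:stg}, so that for a (possibly complex) vector $y$ one has $\|y\|^2=\Real{\langle y,y\rangle}$, and differentiating in a direction $\dot y$,
\[
 D\|y\|\,\dot y=\frac{\Real{\langle y,\dot y\rangle}}{\|y\|},\qquad D\!\left(\frac{y}{\|y\|}\right)\dot y=\frac{\dot y\,\|y\|-y\,\frac{\Real{\langle y,\dot y\rangle}}{\|y\|}}{\|y\|^2}.
\]
The second formula is just the quotient rule applied to $y/\|y\|$ using the first, and these two expressions are precisely the building blocks that appear in the statement.

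For the stereographic projection $\Pi$ I would argue componentwise: the $i$-th coordinate of $\Pi(p)$ is $p_i/(1-p_{2d+1})$, and the quotient rule gives
\[
 \frac{d}{dt}\Big|_{t=0}\frac{p_i+t\dot p_i}{1-(p_{2d+1}+t\dot p_{2d+1})}=\frac{\dot p_i}{1-p_{2d+1}}+\frac{p_i\,\dot p_{2d+1}}{(1-p_{2d+1})^2},
\]
which, assembled over $1\le i\le 2d$, is exactly the first displayed formula; the hypothesis $p\neq\pm\mathbf n$ guarantees $1-p_{2d+1}\neq0$.

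For $\varphi_g^{-1}(y)=g^{-1}(\|y\|)\,y/\|y\|$ I would apply the product rule, writing it as the scalar factor $g^{-1}(\|y\|)$ times the unit vector $y/\|y\|$. Differentiating the scalar factor by the chain rule (using $D\|y\|$ above) and the unit vector by the second building block yields
\[
 D\varphi_g^{-1}(y)\dot y=(g^{-1})'(\|y\|)\,\frac{y\,\Real{\langle y,\dot y\rangle}}{\|y\|^2}+g^{-1}(\|y\|)\,\frac{\dot y\,\|y\|-y\,\frac{\Real{\langle y,\dot y\rangle}}{\|y\|}}{\|y\|^2},
\]
matching the second formula. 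Here the hypotheses of Definition \ref{def_1} — that $g$ is an increasing $\mathcal C^1$ function — are exactly what guarantee that $g^{-1}$ is differentiable with $(g^{-1})'=1/(g'\circ g^{-1})$, so the expression is well defined. Finally, $\theta(x)=(x,1)/\|(x,1)\|$ is the normalization of the affine map $x\mapsto(x,1)$, whose derivative in direction $\dot x$ is $(\dot x,0)$; substituting $w=(x,1)$ and $\dot w=(\dot x,0)$ into the normalization building block produces the third formula verbatim.

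Since every step is an elementary differentiation, there is no genuine obstacle; the only point requiring care is the consistent use of $\Real{\langle\cdot,\cdot\rangle}$ for the real inner product when differentiating norms of complex vectors, and the appeal to Definition \ref{def_1} to ensure that $g^{-1}$ is $\mathcal C^1$ so that $(g^{-1})'$ makes sense in the second identity.
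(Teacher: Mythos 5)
Your computation is correct and is precisely the elementary differentiation that the paper leaves as ``an exercise to the reader''; isolating the derivative of the norm and of the normalization map $y\mapsto y/\|y\|$ as reusable building blocks is a clean way to organize it. No gaps: the quotient rule for the components of $\Pi$, the product rule for $\varphi_g^{-1}$, and the substitution $w=(x,1)$, $\dot w=(\dot x,0)$ for $\theta$ each reproduce the stated formulas verbatim.
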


\begin{dem}
This computation is an exercise left to the reader. It is convenient to consider the basis described before the lemma.

$\square$
\end{dem}
\begin{cor}\label{cor:nj}
 The Jacobian determinants of  $\Pi$ and $\varphi_g^{-1}$ satisfy:
 \begin{align*}
  \mathrm{Jac}\Pi(p)=&\frac{1}{(1-p_{2d+1})^{2d}}=\left(\frac{1+\|\Pi(p)\|^2}{2}\right)^{2d},\\
  \mathrm{Jac}\varphi_g^{-1}(y)=& (g^{-1})'(\|y\|)\left(\frac{g^{-1}(\|y\|)}{\|y\|}\right)^{2d-1}.
 \end{align*}

\end{cor}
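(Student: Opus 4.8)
The plan is to read off both Jacobian determinants directly from the derivative formulas in Lemma \ref{lem_3} by evaluating the differentials on cleverly chosen orthonormal bases, so that in each case the image vectors form an orthogonal system whose norms simply multiply to give the Jacobian.

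For $\mathrm{Jac}\,\Pi(p)$ I would work with the orthonormal basis $\{\dot p^1,\ldots,\dot p^{2d-1},\dot p^{2d}\}$ of $T_p\S{2d}=p^\perp$ described just before Lemma \ref{lem_3}. For $1\le i\le 2d-1$ one has $\dot p^i_{2d+1}=0$, so the second summand in the formula for $D\Pi(p)\dot p$ drops out and $D\Pi(p)\dot p^i=\frac{1}{1-p_{2d+1}}(\dot p^i_1,\ldots,\dot p^i_{2d})$, a vector of norm $\frac{1}{1-p_{2d+1}}$. For the remaining vector I would substitute the explicit components of $\dot p^{2d}=(\mathbf n-p_{2d+1}p)/\sqrt{1-p_{2d+1}^2}$; a short simplification collapses both summands into $\frac{1}{(1-p_{2d+1})\sqrt{1-p_{2d+1}^2}}(p_1,\ldots,p_{2d})$, which again has norm $\frac{1}{1-p_{2d+1}}$ since $\|(p_1,\ldots,p_{2d})\|=\sqrt{1-p_{2d+1}^2}$. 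Orthogonality of the images is immediate: for $i,j\le 2d-1$ it follows from orthogonality of $\dot p^i,\dot p^j$, and the image of $\dot p^{2d}$ is proportional to $(p_1,\ldots,p_{2d})$, which is orthogonal to each $(\dot p^i_1,\ldots,\dot p^i_{2d})$ because $\langle\dot p^i,p\rangle=0$ while $\dot p^i_{2d+1}=0$. Hence $D\Pi(p)$ sends an orthonormal basis to an orthogonal family of $2d$ vectors each of norm $\frac{1}{1-p_{2d+1}}$, so $\mathrm{Jac}\,\Pi(p)=(1-p_{2d+1})^{-2d}$. The second displayed equality is purely algebraic: from $\Pi(p)=\frac{1}{1-p_{2d+1}}(p_1,\ldots,p_{2d})$ one gets $\|\Pi(p)\|^2=\frac{1+p_{2d+1}}{1-p_{2d+1}}$, whence $\frac{1+\|\Pi(p)\|^2}{2}=\frac{1}{1-p_{2d+1}}$.

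For $\mathrm{Jac}\,\varphi_g^{-1}(y)$ I would exploit that $\varphi_g^{-1}$ is a radial map and diagonalise its differential in the orthonormal basis $\{\,y/\|y\|,e_2,\ldots,e_{2d}\,\}$ with $e_2,\ldots,e_{2d}\perp y$. Plugging $\dot y=y/\|y\|$ into the formula of Lemma \ref{lem_3} makes the second summand vanish and yields $D\varphi_g^{-1}(y)\tfrac{y}{\|y\|}=(g^{-1})'(\|y\|)\tfrac{y}{\|y\|}$; plugging $\dot y=e_j$ makes the first summand vanish (since $\mathrm{Re}\langle y,e_j\rangle=0$) and yields $D\varphi_g^{-1}(y)e_j=\frac{g^{-1}(\|y\|)}{\|y\|}e_j$. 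The differential is therefore diagonal in this basis, with a single radial eigenvalue $(g^{-1})'(\|y\|)$ and $2d-1$ tangential eigenvalues $\frac{g^{-1}(\|y\|)}{\|y\|}$, giving $\mathrm{Jac}\,\varphi_g^{-1}(y)=(g^{-1})'(\|y\|)\big(\tfrac{g^{-1}(\|y\|)}{\|y\|}\big)^{2d-1}$.

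Neither step is truly hard once the bases are fixed; the only place demanding care is the explicit simplification of $D\Pi(p)\dot p^{2d}$, where one must combine the two fractions over the common denominator $(1-p_{2d+1})^2\sqrt{1-p_{2d+1}^2}$ and use $p_{2d+1}^2+\sum_{k\le 2d}p_k^2=1$. This computation is exactly what makes $\Pi$ conformal (all singular values of $D\Pi(p)$ equal), and it is the single point where a sign or a power could slip; everything else is bookkeeping with orthonormal frames.
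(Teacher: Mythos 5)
Your proposal is correct and follows essentially the same route as the paper: both proofs evaluate $D\Pi(p)$ and $D\varphi_g^{-1}(y)$ on the adapted orthonormal bases (the $\dot p^i$ with $\dot p^{2d}$ radial in the north-pole direction, and a basis ending in $y/\|y\|$), observe that orthogonality is preserved, and multiply the norms of the images. The only difference is that you spell out the ``little algebra'' the paper leaves implicit (the simplification of $D\Pi(p)\dot p^{2d}$ and the verification that all singular values of $D\Pi(p)$ equal $(1-p_{2d+1})^{-1}$), and these computations check out.
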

\begin{dem}
 Let $p\neq \pm\mathbf{n}$ and let $\dot p^1,\ldots,\dot p^{2d-1},\dot p^{2d}$ be the basis described at the beginning of this section. Then, it is clear from Lemma \ref{lem_3} that $D\Pi(p)$ preserves the orthogonality of the basis and a little algebra shows that it is an homothetic transformation with ratio $(1-p_{2d+1})^{-1}$, hence
 \[
  \mathrm{Jac}\Pi(p)=\frac{1}{(1-p_{2d+1})^{2d}}.
 \]
And noting that
 \[
  \|\Pi(p)\|^2=\frac{1+p_{2d+1}}{1-p_{2d+1}} \Rightarrow p_{2d+1}=\frac{\|\Pi(p)\|^2-1}{\|\Pi(p)\|^2+1},
 \]
 the first claim of the corollary follows.

For the second Jacobian, given $y\in\C^d\equiv\R^{2d}$ we consider an orthonormal basis $\dot v^1,\ldots,\dot v^{2d}$ whose last vector is $y/\|y\|$. Then, $D\varphi_g^{-1}$ preserves the orthogonality of this basis and hence we have
\[
\mathrm{Jac}\varphi_g^{-1}(y)=\|D\varphi_g^{-1}(y)(y/\|y\|)\|\cdot\prod_{i=1}^{2d-1}\|D\varphi_g^{-1}(y)\dot v^{i}\|,
\]
which from Lemma \ref{lem_3} equals
\[
 (g^{-1})'(\|y\|)\cdot\prod_{i=1}^{2d-1}\frac{g^{-1}(\|y\|)}{\|y\|},
\]
and the corollary follows.

\end{dem}

\subsection{Proof of Proposition \ref{prop_5}}

The push-forward of a projection DPP is a DPP, see \cite[Proposition 2.5]{BE2018} for a proof.
So $\mathfrak{X}_*^{(N,d,g)}$ is a DPP in $\S{2d}$, and from the same proposition and equation \eqref{eq:original}, we know that its associated kernel is 
\begin{equation}\label{aux}
 K_*^{(N,d,g)}(p,q)=\frac{N d!}{{\pi}^{d}}\frac{(1 + \left\langle z,w \right\rangle)^{L}\sqrt{|\text{\rm Jac}(\varphi_g^{-1}\circ \Pi)(z) \text{\rm Jac}(\varphi_g^{-1}\circ \Pi)(z)|}}{(1+\|z\|^{2})^{\frac{d+L+1}{2}} (1+\|w\|^{2})^{\frac{d+L+1}{2}}},
\end{equation}
where $z=\varphi_g^{-1}(\Pi(p)),\;w=\varphi_g^{-1}(\Pi(q))$. Now, from the chain rule,
\[
\mathrm{Jac}(\varphi_g^{-1}\circ \Pi)(p)=\mathrm{Jac}(\varphi_g^{-1})(\Pi(p))\mathrm{Jac}\Pi(p).
\]
From Corollary \ref{cor:nj}, this last equals
\[
 (g^{-1})'(\|\Pi(p)\|)\left(\frac{g^{-1}(\|\Pi(p)\|)}{\|\Pi(p)\|}\right)^{2d-1}\left(\frac{1+\|\Pi(p)\|^2}{2}\right)^{2d}.
\]
Now, $\|\Pi(p)\|=\|\varphi_g(z)\|=g(\|z\|)$ and thus we have:
\[
 \mathrm{Jac}(\phi^{-1})(p)= (g^{-1})'(g(\|z\|))\left(\frac{\|z\|}{g(\|z\|)}\right)^{2d-1}\left(\frac{1+g(\|z\|)^2}{2}\right)^{2d},
\]
namely:
\begin{equation}
 \label{eq:jacphiinv}
 \mathrm{Jac}(\phi^{-1})(p)= \frac{1}{g'(\|z\|)}\left(\frac{\|z\|}{g(\|z\|)}\right)^{2d-1}\left(\frac{1+g(\|z\|)^2}{2}\right)^{2d},
\end{equation}
and the same holds changing $p$ to $q$ and $z$ to $w$.
Putting together \eqref{aux} and \eqref{eq:jacphiinv} we get Proposition \ref{prop_5}.

\subsection{Proof of Proposition \ref{prop_3}}

During this proof, we denote by $K_*$ the kernel $K_*^{(N,d,g)}$. 
From Proposition \ref{prop_5}:
\begin{equation*}
\begin{split}
K_* (p,p)
= \frac{N d!}{\pi^{d}2^{2d}} \frac{  (g(\|z\|)^{2}+1)^{2d} \|z\|^{2d-1} }{ g'(\|z\|) g(\|z\|)^{2d-1}  (1+\|z\|^{2})^{d+1} }. \\
\end{split}
\end{equation*}
From Proposition \ref{prop_6}, if the DPP associated to $K_*$ is homogeneus then $K_*(p,p) = N/Vol(\S{2d})$ for all $p\in\S{2d}$. 
Thus, in order for the process to be homogeneus one must have:
\begin{equation*}
\frac{N d!}{\pi^{d}2^{2d}} \frac{  (g(t)^{2}+1)^{2d} t^{2d-1} }{ g'(t) g(t)^{2d-1}  (1+t^{2})^{d+1}} 
=\frac{N}{Vol(\S{2d})}=
\frac{N\Gamma\left( d +\frac{1}{2} \right)}{2\pi^{d +\frac{1}{2}}}, \quad t\in(0,\infty),
\end{equation*}
namely,
\begin{multline}\label{eq:gedo}
\frac{g'(t)g(t)^{2d-1}}{(g(t)^{2}+1)^{2d}}=\frac{\sqrt{\pi} d!}{2^{2d-1} \Gamma\left( d+\frac{1}{2} \right)}\frac{   t^{2d-1} }{    (1+t^{2})^{d+1} }=dB(d,d) \frac{  t^{2d-1} }{   (1+t^{2})^{d+1} },
\end{multline}
where we have used Legendre's duplication formula for the Gamma function (see \cite[Formula 5.5.5]{NIST:DLMF}),
\begin{equation}\label{eq:leg}
\Gamma(z) \; \Gamma\left(z + \frac{1}{2}\right) =
 2^{1-2z} \; \sqrt{\pi} \; \Gamma(2z) \,.\!
\end{equation}
\noindent Now we  integrate \eqref{eq:gedo} in both sides. On one hand,

\begin{equation*}
\begin{split}
\displaystyle\int dB(d,d) \frac{   t^{2d-1} }{   (1+t^{2})^{d+1} } d t
 = \frac{B(d,d)}{2} \frac{t^{2d}}{(1+t^{2})^{d}} \\
\end{split}.
\end{equation*}

\noindent On the other hand,

\begin{equation*}
\begin{split}
& \displaystyle\int \frac{ g^{2d-1}}{(g^{2}+1)^{2d}} d g
= 
\frac{1}{2} \left[B(d,d) - \mathrm{B}_{\frac{1}{1+g^2}}(d,d) \right] \end{split},
\end{equation*}
where $\mathrm{B}_x$ denotes Euler's incomplete Beta function (see equation \eqref{eq:betas} for a definition). We thus have (using the regularized incomplete Beta function):
\begin{equation*}
1 - \mathrm{I}_{\frac{1}{1+g^2}}(d,d)=
\frac{t^{2d}}{(1+t^{2})^{d}},
\end{equation*}
where the chosen integration constant is the unique with the property that $g(0)=0$. Recall that $\mathop{\mathrm{I}_{x}\/}\nolimits\!\left(a,b\right)=1-\mathop{\mathrm{I}_{1-x}\/}\nolimits\!%
\left(b,a\right)$ (see for example \cite{NIST:DLMF}). Hence, the function $g$ we are looking for must satisfy:
\begin{equation*}
\mathrm{I}_{\frac{g^{2}}{1+g^2}}(d,d) =
\left(\frac{t^{2}}{1+t^{2}}\right)^{d}.
\end{equation*}
Note that $g$ is well--defined since the regularized Beta function is bijective in the range $ [0,1]$ and for all $t \in (0,\infty)$, $0 < \frac{t^{2}}{1+t^{2}} < 1$.

\noindent We also check that $ g $ satisfies the claimed properties:
\begin{itemize}

\item $\boldsymbol{g}$ \textbf{is an increasing function}: $ g $ satisfies the differential equation \eqref{eq:gedo} and thus the derivative is positive.

\item $\boldsymbol{g(0) = 0}$ since we have chosen the correct integration constant.

\item $\boldsymbol{\lim\limits_{t \rightarrow \infty} g(t) =  \infty}$:  $\lim\limits_{t \rightarrow \infty} \mathrm{I}_{\frac{g^{2}}{1+g^2}}(d,d) = \lim\limits_{t \rightarrow \infty} \left(\frac{t^{2}}{1+t^{2}}\right)^{d} = 1$.

Now if $\lim\limits_{t \rightarrow \infty} \mathrm{I}_{\frac{g^{2}}{1+g^2}}(d,d) = 1 \Rightarrow \lim\limits_{t \rightarrow \infty} \frac{g^{2}}{1+g^2} = 1 \Rightarrow \lim\limits_{t \rightarrow \infty} g = \pm \infty $.
Since $g$ is increasing and $g(0) = 0$, the only possible solution is $\lim\limits_{t \rightarrow \infty} g = \infty.$

\item $\boldsymbol{g}$ \textbf{is} $\boldsymbol{\mathcal{C}^{\infty}}$: since $t\to\left(\frac{t^{2}}{1+t^{2}}\right)^{d}$ is $\mathcal{C}^{\infty}$ and so is the inverse regularized incomplete Beta function, we conclude that $t\to s(t)=\frac{g(t)^{2}}{1+g(t)^2}$ is $\mathcal{C}^{\infty}$. Then we can solve $g = \sqrt{\frac{s}{1-s}}$, so in the interval $s \in (0,1)$, $g$ is a composition of $\mathcal{C}^{\infty}$ functions whose denominator does not vanish and thereby is $\mathcal{C}^{\infty}$.

\end{itemize}

$\square$

\subsection{Proof of Theorem \ref{cor_3}}

\noindent We only have to replace the expresion for $g'(t)$ given in equation \eqref{eq:gedo} to compute $R(t)$ from Proposition \ref{prop_5}:
\[
R(t)=\frac{1}{\sqrt{d\mathrm{B}(d,d)}(1+t^2)^{L/2}}.
\]
We then have:
 \begin{equation*}
K_{*}^{(N)}(p,q) = \frac{Nd!}{\pi^d2^{2d}d\mathrm{B}(d,d)} \frac{\left( 1 + \left\langle z,w \right\rangle \right)^{L}}{(1 + \|z\|^{2})^{\frac{L}{2}} (1 + \|w\|^{2})^{\frac{L}{2}} }.
\end{equation*}

\noindent The expression for the constant $N/Vol(\S{2d})$ follows using \eqref{eq:leg} and \eqref{eq:volS}.

\subsection{Proof of Theorem \ref{th_2}}

It is well known (see for example \cite[Formula 1.2.2.]{Hough_zerosof}) that the expected value of the Riesz energy of a set of $N$ points coming from $\X$ satisfies:

\begin{equation*}
\begin{split}
\Esp{x\sim\X}{\E{s}(x_{1},\ldots ,x_{N})}  
& = \displaystyle\int_{\mathbb{S}^{2d} \times \mathbb{S}^{2d}} \frac{K_{*}^{(N)}(p,p)^{2} - |K_{*}^{(N)}(p,q)|^{2}}{\|p-q\|^{s}}  d p d q  \\
& = \frac{N^2}{Vol(\S{2d})^{2}} \displaystyle\int_{\mathbb{S}^{2d} \times \mathbb{S}^{2d}} \frac{1 - \left( \frac{|K_{*}^{(N)}(p,q)|}{K_{*}^{(N)}(p,p)} \right)^{2}}{\|p-q\|^{s}}  d p d q  \\
& = N^{2} V_{s}(\mathbb{S}^{2d})
- \frac{N^2}{Vol(\S{2d})^{2}}  \displaystyle\int_{\mathbb{S}^{2d} \times \mathbb{S}^{2d}} \frac{\left( \frac{|K_{*}^{(N)}(p,q)|}{K_{*}^{(N)}(p,p)} \right)^{2}}{\|p-q\|^{s}}  d p d q.
\end{split}
\end{equation*}

\noindent Bounding the integral in the last term is a non--trivial task. We will do it in several steps.

\begin{prop}
\label{prop_7}
Let $p,q$ be points of $\mathbb{S}^{2d}$, then
\begin{equation*}
\left( \frac{|K_{*}^{(N)}(p,q)|}{K_{*}^{(N)}(p,p)} \right)^{2} 
\geq
\max \left\lbrace  (1- \| \phi(p) - \phi(q) \|^{2})^{L} , 0\right\rbrace
\end{equation*}
\noindent where $\phi$ was defined in Definition \ref{def_10}. 
\end{prop}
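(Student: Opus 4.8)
The plan is to substitute the explicit kernel from Theorem \ref{cor_3} and reduce the claimed estimate to an elementary scalar inequality. Since $K_*^{(N)}(p,p)=N/Vol(\S{2d})$, Theorem \ref{cor_3} gives at once
\[
\left(\frac{|K_*^{(N)}(p,q)|}{K_*^{(N)}(p,p)}\right)^2=\left(\frac{|1+\langle z,w\rangle|^2}{(1+\|z\|^2)(1+\|w\|^2)}\right)^L,
\]
where $z=\varphi_g^{-1}(\Pi_d(p))$, $w=\varphi_g^{-1}(\Pi_d(q))$, and $\langle z,w\rangle$ denotes the Hermitian product on $\C^d$. So everything reduces to controlling the base of this $L$-th power from below.

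Next I would compute the right-hand side geometrically using Definition \ref{def_10}. By construction $\phi(p)=\theta(z)=(z,1)/\|(z,1)\|$ and $\phi(q)=\theta(w)=(w,1)/\|(w,1)\|$ are unit vectors (under the identification $\C^{d+1}\equiv\R^{2d+2}$), so that $\|\phi(p)-\phi(q)\|^2=2-2\langle\phi(p),\phi(q)\rangle$. The real inner product of $(z,1)$ and $(w,1)$ picks out only the real part of the Hermitian product, whence
\[
\langle\phi(p),\phi(q)\rangle=\frac{1+\Real{\langle z,w\rangle}}{\sqrt{(1+\|z\|^2)(1+\|w\|^2)}}=:t,\qquad 1-\|\phi(p)-\phi(q)\|^2=2t-1.
\]

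The heart of the argument is then a short scalar chain. Dropping the imaginary part can only shrink the numerator, $|1+\langle z,w\rangle|^2\ge(1+\Real{\langle z,w\rangle})^2$, which already shows that the base of the left-hand power dominates $t^2$; and $t^2\ge 2t-1$ because $(t-1)^2\ge0$. Combining,
\[
\frac{|1+\langle z,w\rangle|^2}{(1+\|z\|^2)(1+\|w\|^2)}\ \ge\ t^2\ \ge\ 2t-1\ =\ 1-\|\phi(p)-\phi(q)\|^2.
\]
Finally I would raise to the $L$-th power and dispose of the maximum: when $1-\|\phi(p)-\phi(q)\|^2\ge0$ the map $x\mapsto x^L$ is nondecreasing on $[0,\infty)$, so taking $L$-th powers of the display yields exactly the asserted inequality, whereas when $1-\|\phi(p)-\phi(q)\|^2<0$ the left-hand side is manifestly nonnegative and thus dominates the value $0$ in the maximum.

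I do not expect a serious obstacle here: the computation is essentially self-contained once the kernel is in hand. The one point that must be handled with care is the bookkeeping of two different inner products. The kernel carries the full complex quantity $1+\langle z,w\rangle$, while the geometric distance $\|\phi(p)-\phi(q)\|$ lives on the real sphere and sees only $1+\Real{\langle z,w\rangle}$. Reconciling the two is precisely where an inequality, rather than an identity, enters, through $|1+\langle z,w\rangle|^2\ge(1+\Real{\langle z,w\rangle})^2$; getting the normalization in $\theta$ and the identification $\C^{d+1}\equiv\R^{2d+2}$ exactly right is the main thing to verify.
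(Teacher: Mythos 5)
Your proposal is correct and follows essentially the same route as the paper: both rewrite the kernel ratio as $\bigl|\langle (z,1),(w,1)\rangle\bigr|^{2L}/\bigl(\|(z,1)\|\|(w,1)\|\bigr)^{2L}$, drop the imaginary part to pass to the real inner product $t=\Re\langle\phi(p),\phi(q)\rangle=1-\tfrac12\|\phi(p)-\phi(q)\|^2$, and conclude via $t^2\ge 2t-1$ (the paper writes this same chain as $|\langle x,y\rangle|^2\ge\Re\langle x,y\rangle^2=(1-\|x-y\|^2/2)^2\ge 1-\|x-y\|^2$). Your explicit handling of the sign of $1-\|\phi(p)-\phi(q)\|^2$ before raising to the $L$-th power is a welcome bit of extra care that the paper leaves implicit.
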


\begin{dem}

It is obvious that $\left( \frac{|K_{*}^{(N)}(p,q)|}{K_{*}^{(N)}(p,p)} \right)^{2} \geq 0$. 
We will use the fact that for every unit vectors $x,y\in\C^a$, we have 
\[
|\left\langle x, y \right\rangle|^{2} \geq\Real{\langle x,y\rangle}^2= \left(1 - \frac{\|x-y\|^{2}}{2}\right)^2\geq1-\|x-y\|^2.
 \]
 Then,

\begin{equation*}
\begin{split}
\left( \frac{|K_{*}^{(N)}(p,q)|}{K_{*}^{(N)}(p,p)} \right)^{2} = & 
 \left( \frac{| 1 + \left\langle z,w \right\rangle |^{2}}{(1 + \|z\|^{2})(1 + \|w\|^{2})} \right)^{L} =
 \left| \frac{  \left\langle (z,1),(w,1) \right\rangle }{\sqrt{1 + \|z\|^{2}} \sqrt{1 + \|w\|^{2}}} \right|^{2L} = \\
& =  \left| \left\langle \frac{(z,1)}{\|(z,1)\|}, \frac{(w,1)}{\|(w,1)\|} \right\rangle\right|^{2L}
\geq  \left( 1 - \left\| \frac{(z,1)}{\|(z,1)\|} - \frac{(w,1)}{\|(w,1)\|} \right\|^{2} \right)^{L} = \\
& =  \left( 1 - \left\| \frac{(\varphi^{-1}(\Pi(p)),1)}{\|(\varphi^{-1}(\Pi(p)),1)\|} - \frac{(\varphi^{-1}(\Pi(q)),1)}{\|(\varphi^{-1}(\Pi(q)),1)\|} \right\|^{2} \right)^{L} = \\
& =  (1- \| \phi(p) - \phi(q) \|^{2})^{L}.
\end{split}
\end{equation*}

$\square$
\end{dem}

\begin{lem}
\label{lem_2}

Let $p,q \in \mathbb{S}^{2d}$, then the following inequality holds.

\begin{equation*}
\| \phi(p) - \phi(q) \| \leq \left(\|p-q\|+\|p-q\|^3\right) \sup \|D\phi(x)\|,
\end{equation*}
where the supremum is taken for $x$ in the geodesic from $p$ to $q$.
\end{lem}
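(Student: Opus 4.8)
The plan is to prove the estimate by the mean-value inequality along the minimizing geodesic of $\mathbb{S}^{2d}$ joining $p$ and $q$, and then to convert geodesic (arc) length into chordal (Euclidean) length; the cubic correction term $\|p-q\|^3$ will be exactly what is needed to absorb the gap between these two notions of distance.

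First I would let $\gamma:[0,\ell]\to\mathbb{S}^{2d}$ be a minimizing geodesic from $p$ to $q$ parametrized by arc length, so that $\ell$ is the geodesic distance between $p$ and $q$ (for unit vectors, the angle $\alpha\in[0,\pi]$ between them) and $\|\gamma'(t)\|=1$ for all $t$. Since $\phi$ is $\mathcal{C}^1$ along $\gamma$, the curve $t\mapsto\phi(\gamma(t))$ is $\mathcal{C}^1$, and the fundamental theorem of calculus together with the chain rule give
\[
\phi(q)-\phi(p)=\int_0^\ell \frac{d}{dt}\phi(\gamma(t))\,dt=\int_0^\ell D\phi(\gamma(t))\,\gamma'(t)\,dt.
\]
Taking norms and using $\|\gamma'(t)\|=1$ together with the definition of the operator norm,
\[
\|\phi(p)-\phi(q)\|\le\int_0^\ell\|D\phi(\gamma(t))\gamma'(t)\|\,dt\le\int_0^\ell\|D\phi(\gamma(t))\|\,dt\le \ell\,\sup_{x}\|D\phi(x)\|,
\]
the supremum being over the geodesic. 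This reduces the lemma to the purely geometric inequality $\ell\le\|p-q\|+\|p-q\|^3$.

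To close the argument I would relate arc length to chord length. Writing $\beta=\ell/2\in[0,\pi/2]$, the chord length is $\|p-q\|=2\sin(\ell/2)=2\sin\beta$, so the target inequality becomes $2\beta\le 2\sin\beta+8\sin^3\beta$, i.e.\ $\beta-\sin\beta\le 4\sin^3\beta$. I would bound the left-hand side from above by the elementary estimate $\sin\beta\ge\beta-\beta^3/6$ (valid for $\beta\ge0$), giving $\beta-\sin\beta\le\beta^3/6$, and the right-hand side from below by Jordan's inequality $\sin\beta\ge 2\beta/\pi$ on $[0,\pi/2]$, giving $4\sin^3\beta\ge 32\beta^3/\pi^3$. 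Since $\pi^3<192$ one has $\tfrac16\le\tfrac{32}{\pi^3}$, whence $\beta-\sin\beta\le\beta^3/6\le 32\beta^3/\pi^3\le 4\sin^3\beta$, which is precisely the inequality sought.

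I do not expect a serious obstacle here: the two ingredients are the manifold mean-value inequality and two standard one-variable bounds. The only points requiring care are (i) that $\phi$, and hence $D\phi$, is defined and $\mathcal{C}^1$ along the chosen geodesic, which holds away from the north and south poles (the points excluded from the domains of $\Pi$ and $\varphi^{-1}$); since these form a set of measure zero this suffices for the later integration, and the general case follows by continuity; and (ii) the antipodal case $q=-p$, where the minimizing geodesic is not unique, but the argument applies to any choice and the crude estimate $\ell=\pi\le\|p-q\|+\|p-q\|^3=10$ holds regardless. The comparison constant in the last step (the fact that $\pi^3\le192$) is exactly what forces a cubic rather than a merely linear correction term.
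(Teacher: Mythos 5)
Your proof is correct and follows essentially the same route as the paper: the mean-value/fundamental-theorem-of-calculus bound along the minimizing geodesic, followed by the comparison $2\arcsin(\|p-q\|/2)\le\|p-q\|+\|p-q\|^3$. The only difference is that you actually verify this last elementary inequality (via $\sin\beta\ge\beta-\beta^3/6$ and Jordan's inequality, with the constant check $\pi^3\le192$), which the paper leaves as an exercise to the reader, and you also address the degenerate cases (poles, antipodal points) that the paper passes over.
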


\begin{dem}
Given two points $p, q \in \mathbb{S}^{2d}$, let $\alpha = d_{\mathbb{S}^{2d}} (p,q)$ where $d_{\mathbb{S}^{2d}}$ is the distance in the sphere, and let $\gamma$ be the geodesic segment from $p$ to $q$. Then we have 
\begin{align*}
\| \phi(p) - \phi(q) \| = &
\| \phi(\gamma(0)) - \phi(\gamma(\alpha)) \|
\leq \left\| \displaystyle\int_{0}^{\alpha} D\phi(\gamma(t)) \dot{\gamma}(t) dt \right\|\\
\leq&
 \displaystyle\int_{0}^{\alpha} \left\| D\phi(\gamma(t)) \dot{\gamma}(t) \right\| dt \leq \alpha\sup \|D\phi(x)\|.
\end{align*}
where $x$ lies on the geodesic from $p$ to $q$. 
We now note that
\[
 \alpha=d_{\mathbb{S}^{2d}} (p,q) = 2\arcsin\left( \frac{\| p-q \|}{2} \right)\leq \|p-q\|+\|p-q\|^3,
\]
the last since $2\arcsin(x/2)\leq x+x^3$ for $0\leq x\leq 2$ (a simple exercise left to the reader). The lemma follows.

$\square$
\end{dem}

\begin{prop}
\label{prop_8}
Let $\mathbf{n}=(0,\ldots,0,1)\in\S{2d}$ be the north pole and let $p\in \mathbb{S}^{2d}$, $p\neq\mathbf{n}$. Let $\dot p^1,\ldots,\dot p^{2d}$ be the orthonormal basis of $p^\perp$ defined in Section \ref{sec:derivatives}. Then, $D\phi(p)$ preserves the orthogonality of the basis and we have
\begin{equation*}
\left\| D\phi(p)\dot{p}^{i} \right\|=
\frac{g^{-1}\left( \sqrt{\frac{1+p_{2d+1}}{1-p_{2d+1}}} \right)}{\sqrt{1-p_{2d+1}^{2}}\sqrt{1 + \left( g^{-1}\left( \sqrt{\frac{1+p_{2d+1}}{1-p_{2d+1}}} \right) \right)^{2} }}, \quad 1\leq i\leq 2d-1,
\end{equation*}
and
\begin{equation*}
\left\| D\phi (p)\dot p^{2d}  \right\|=
\frac{(g^{-1})'\left( \sqrt{\frac{1+p_{2d+1}}{1-p_{2d+1}}} \right)}{\left(1 + g^{-1}\left( \sqrt{\frac{1+p_{2d+1}}{1-p_{2d+1}}} \right)^{2} \right) (1-p_{2d+1}) }.
\end{equation*}
In particular, $\|D\phi(p)\|$ is the supremum of these two quantities.
\end{prop}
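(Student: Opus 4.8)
The plan is to differentiate the composition $\phi=\theta\circ\varphi^{-1}\circ\Pi$ via the chain rule, $D\phi(p)=D\theta(z)\,D\varphi^{-1}(y)\,D\Pi(p)$ with $y=\Pi(p)$ and $z=\varphi^{-1}(y)$, and to track how each of the three factors acts on the distinguished basis. The guiding observation is that the basis splits into an \emph{angular} part $\dot p^1,\ldots,\dot p^{2d-1}$ and a \emph{radial} part $\dot p^{2d}$, measured relative to the point $y$, and that all three differentials respect this splitting: each maps the radial direction to a radial direction and acts conformally (by a single scalar) on the orthogonal angular subspace. Granting this, the vectors $D\phi(p)\dot p^i$ are pairwise orthogonal, so the norms computed below are precisely the singular values of $D\phi(p)$ and $\|D\phi(p)\|$ is their maximum.

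First I would establish the angular/radial dichotomy. The directional derivative of $p\mapsto\|\Pi(p)\|^2$ along $\dot p^i$ equals $2\langle y,D\Pi(p)\dot p^i\rangle$; since $\|\Pi(p)\|^2=(1+p_{2d+1})/(1-p_{2d+1})$ depends only on the height $p_{2d+1}$ and $\langle\mathbf n,\dot p^i\rangle=0$ for $i\leq 2d-1$, this derivative vanishes, so $D\Pi(p)\dot p^i\perp y$ is angular for those indices. By Corollary \ref{cor:nj}, $D\Pi(p)$ is a homothety of ratio $(1-p_{2d+1})^{-1}$, hence preserves orthogonality and scales every vector by $(1-p_{2d+1})^{-1}$; in particular $D\Pi(p)\dot p^{2d}$, being orthogonal to all the angular images, must be radial.

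Next I would read off the radial and angular scalings of the remaining two differentials from Lemma \ref{lem_3}. Feeding the radial vector $y/\|y\|$ into the formula for $D\varphi^{-1}(y)$ makes the second summand vanish and gives scaling $(g^{-1})'(\|y\|)$, while an angular vector $\dot y\perp y$ kills the first summand and gives scaling $g^{-1}(\|y\|)/\|y\|$, so $\varphi^{-1}$ respects the splitting. Similarly, writing $u=(z,1)$, the formula for $D\theta(z)$ is $\|u\|^{-1}$ times the orthogonal projection of $(\dot z,0)$ onto $u^{\perp}$; a short computation yields scaling $1/(\|z\|^2+1)$ radially and $1/\sqrt{\|z\|^2+1}$ angularly. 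Multiplying the three conformal factors in each regime and substituting $\|y\|=\sqrt{(1+p_{2d+1})/(1-p_{2d+1})}$, $\|z\|=g^{-1}(\|y\|)$, together with the identity $(1-p_{2d+1})\|y\|=\sqrt{1-p_{2d+1}^2}$, produces the two displayed expressions.

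The only genuinely delicate point is the bookkeeping confirming that orthogonality survives every stage: one must check that the angular subspace is mapped conformally (so distinct angular basis vectors stay orthogonal, and orthogonal to the radial image) and that the real inner product appearing in $D\theta$ does not mix the radial and angular parts. Once this is verified, the fact that $D\phi(p)$ sends an orthonormal basis to an orthogonal system identifies the two norms as the singular values and yields $\|D\phi(p)\|=\max$ of the two, completing the proof.
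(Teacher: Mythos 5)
Your argument is correct and takes essentially the same route as the paper's proof: apply the chain rule to $\phi=\theta\circ\varphi^{-1}\circ\Pi$ and evaluate each factor on the distinguished basis using Lemma \ref{lem_3}. Your radial/angular conformal-factor bookkeeping (with the identity $(1-p_{2d+1})\|\Pi(p)\|=\sqrt{1-p_{2d+1}^{2}}$) is exactly the ``some algebra'' the paper leaves implicit, and it correctly yields both norms, the preservation of orthogonality, and the identification of $\|D\phi(p)\|$ with the larger of the two singular values.
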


\begin{dem}

\noindent We recall that $\phi(p) = (\theta \circ \varphi^{-1} \circ \Pi) (p)$.
 
\noindent Using the chain rule, 
\begin{equation*}
D\phi(p)\dot{p} = D(\theta \circ \varphi^{-1} \circ \Pi) (p)\dot{p} = D\theta (\varphi^{-1}(\Pi (p))) D\varphi^{-1}(\Pi(p))  D\Pi(p) \dot{p}.
\end{equation*}
From Lemma \ref{lem_3}, after some algebra we get for $1\leq i\leq 2d-1$:
\begin{equation*}
D\theta (\varphi^{-1}(\Pi (p))) D\varphi^{-1}(\Pi(p))D\Pi(p)\dot{p}^{i} = 
\frac{g^{-1}\left( \sqrt{\frac{1+p_{2d+1}}{1-p_{2d+1}}} \right)}{\sqrt{1-p_{2d+1}^{2}}\sqrt{1 + \left( g^{-1}\left( \sqrt{\frac{1+p_{2d+1}}{1-p_{2d+1}}} \right) \right)^{2} }} \dot{p}^{i},
\end{equation*}
while for $\dot p^{2d}$ we get
\begin{equation*}
\begin{split}
& D\theta (\varphi^{-1}(\Pi (p))) D\varphi^{-1}(\Pi(p))D\Pi(p)\frac{\mathbf{n} -p_{2d+1} p}{ \sqrt{1-p_{2d+1}^{2}}} = \\
& \frac{(g^{-1})'\left( \sqrt{\frac{1+p_{2d+1}}{1-p_{2d+1}}} \right)}{\sqrt{1-p_{2d+1}^{2}} (1-p_{2d+1})\sqrt{1 + \left( g^{-1}\left( \sqrt{\frac{1+p_{2d+1}}{1-p_{2d+1}}}\right) \right)^{2} }^{3} } 
\left(
\begin{array}{c}
p_{1} \\
\ldots  \\
p_{2d} \\
-g^{-1}\left( \sqrt{\frac{1+p_{2d+1}}{1-p_{2d+1}}} \right)\sqrt{1-p_{2d+1}^{2}}
\end{array}
\right)
\end{split}
\end{equation*}
Both the preservation of the orthogonality of the basis through $D\phi(p)$ and the formulas for the norm of $D\phi(p)\dot p^i$ follow, and the proposition is proved.

$\square$
\end{dem}

We need to compute the supremum among the two quantities of Proposition \ref{prop_8}, which is a nontrivial task. Following the same notation we have:

\begin{lem}
\label{lem_4}
Fix any $d\geq1$. The two following claims are equivalent:
\begin{enumerate}
	\item For all $p = (p_{1},\ldots ,p_{2d+1}) \in \mathbb{S}^{2d}$ we have
	\begin{equation*}
		\left\| D\phi(p)\dot{p}^{i} \right\| \geq \left\| D\phi (p) \frac{\mathbf{n} -p_{2d+1} p}{ \sqrt{1-p_{2d+1}^{2}}} \right\| .
	\end{equation*}
	(Here, $\dot p^i$, $1\leq i\leq 2d-1$, are as in Proposition \ref{prop_8}.)
	\item For all $s\in(0,1)$ we have
	\[
	d\mathrm{B}_{s}(d,d) \sqrt{1 - (\mathrm{I}_{s}(d,d))^{\frac{1}{d}}} \geq s^{d}(1-s)^{d}.
	\]
\end{enumerate}
\end{lem}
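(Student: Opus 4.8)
The plan is to start from the two explicit norm formulas of Proposition \ref{prop_8} and transform the inequality in claim (1), which is written in terms of $g^{-1}$ and its derivative, into the Beta--function inequality of claim (2). The key point is that both norms depend on $p$ only through the height $p_{2d+1}$, so the rotationally invariant statement (1) is really a one--parameter inequality which, after two changes of variable and the use of the differential equation \eqref{eq:gedo} and the defining relation of Proposition \ref{prop_3}, matches (2) term by term. Because every manipulation is a cross--multiplication by a strictly positive quantity, each step is a reversible equivalence, so the two claims become equivalent in both directions simultaneously.

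First I would set $u=\|\Pi(p)\|$, so that $p_{2d+1}=(u^2-1)/(u^2+1)$, giving $1-p_{2d+1}=2/(u^2+1)$ and $\sqrt{1-p_{2d+1}^2}=2u/(u^2+1)$. Writing $m=g^{-1}(u)$ (equivalently $g(m)=u$ and $(g^{-1})'(u)=1/g'(m)$) and substituting these identities into the two formulas of Proposition \ref{prop_8}, all factors depending on $p_{2d+1}$ collapse, and after cancelling the common positive factor $(g(m)^2+1)/2$ the inequality $\|D\phi(p)\dot p^{i}\|\ge\|D\phi(p)\dot p^{2d}\|$ reduces to
\[
 m\,g'(m)\sqrt{1+m^2}\ge g(m),\qquad m\in(0,\infty).
\]

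Next I would invoke the differential equation \eqref{eq:gedo} to eliminate $g'(m)$, replacing it by $d\mathrm{B}(d,d)\,m^{2d-1}(g(m)^2+1)^{2d}\big/\big((1+m^2)^{d+1}g(m)^{2d-1}\big)$. Substituting, multiplying by $g(m)^{2d-1}$ and dividing by $(g(m)^2+1)^{2d}$ turns the inequality into
\[
 d\mathrm{B}(d,d)\,\frac{m^{2d}}{(1+m^2)^{d+\frac12}}\ge\frac{g(m)^{2d}}{(1+g(m)^2)^{2d}}.
\]
I then set $s=g(m)^2/(1+g(m)^2)\in(0,1)$, for which Proposition \ref{prop_3} reads $\mathrm{I}_s(d,d)=\big(m^2/(1+m^2)\big)^d$. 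A short computation gives $g(m)^{2d}/(1+g(m)^2)^{2d}=s^d(1-s)^d$, while, using $m^2/(1+m^2)=(\mathrm{I}_s(d,d))^{1/d}$, one finds $m^{2d}/(1+m^2)^{d+\frac12}=\mathrm{I}_s(d,d)\sqrt{1-(\mathrm{I}_s(d,d))^{1/d}}$. Since $\mathrm{B}(d,d)\,\mathrm{I}_s(d,d)=\mathrm{B}_s(d,d)$, the displayed inequality becomes precisely claim (2).

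The main obstacle is bookkeeping rather than a conceptual difficulty: one must track the chain of substitutions $p_{2d+1}\rightsquigarrow u\rightsquigarrow m\rightsquigarrow s$ and verify that as $p$ ranges over $\S{2d}\setminus\{\pm\mathbf{n}\}$ the height $p_{2d+1}$ ranges over $(-1,1)$, hence $m$ over $(0,\infty)$ and $s$ over $(0,1)$, so that the two quantifiers match exactly. One should also confirm that each cross--multiplication is by a positive factor, which both guarantees that the direction of the inequality is preserved and makes every step an equivalence, thereby yielding the stated equivalence of (1) and (2).
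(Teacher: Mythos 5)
Your proposal is correct and follows exactly the route the paper intends: the paper's own ``proof'' merely asserts that the equivalence is ``the result of a lengthy computation'' from Proposition \ref{prop_8} and the definition of $g$, and your chain of substitutions $p_{2d+1}\rightsquigarrow u\rightsquigarrow m\rightsquigarrow s$, together with the ODE \eqref{eq:gedo} and the identity $\mathrm{B}(d,d)\,\mathrm{I}_s(d,d)=\mathrm{B}_s(d,d)$, is precisely that computation carried out in full (I checked each cancellation and the positivity of every factor, so all steps are indeed reversible and the quantifier ranges match). In fact your write-up supplies details the paper omits, so it is a strictly more complete version of the same argument.
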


\begin{dem}
The claim is the result of a lengthy computation obtained by working on the expressions from Proposition \ref{prop_8} using the definition of $g$ given in Proposition \ref{prop_3}, which can be written as:
\begin{equation}\label{eq:otra}
 g^{-1}(s)=\left(\frac{\mathrm{I}_{\frac{s^2}{1+s^2}}(d,d)^{1/d}}{1-\mathrm{I}_{\frac{s^2}{1+s^2}}(d,d)^{1/d}}\right)^{1/2}.
\end{equation}

$\square$
\end{dem}

From Lemma \ref{lem_4} and Theorem \ref{th:ineqbeta} we readily have:

\begin{prop}
\label{prop_9}
For all $d \geq 1$ and all $p \in \mathbb{S}^{2d}$
\begin{equation*}
\left\| D\phi(p)\dot{p}^{i} \right\| 
\geq 
\left\| D\phi (p) \frac{\mathbf{n} -p_{2d+1} p}{ \sqrt{1-p_{2d+1}^{2}}} \right\| ,
\end{equation*}
where $\dot p^i$, $1\leq i\leq 2d-1$, are as in Proposition \ref{prop_8}.
\end{prop}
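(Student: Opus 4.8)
The plan is to obtain Proposition \ref{prop_9} as an immediate consequence of the two results that precede it, since all the genuine difficulty has already been isolated there. Indeed, the desired inequality
\[
\left\| D\phi(p)\dot{p}^{i} \right\| \geq \left\| D\phi (p) \frac{\mathbf{n} -p_{2d+1} p}{ \sqrt{1-p_{2d+1}^{2}}} \right\|
\]
is \emph{exactly} assertion (1) of Lemma \ref{lem_4}. By that lemma it is equivalent to assertion (2), namely that $d\mathrm{B}_{s}(d,d) \sqrt{1 - (\mathrm{I}_{s}(d,d))^{1/d}} \geq s^{d}(1-s)^{d}$ holds for every $s\in(0,1)$. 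But this is precisely the content of Theorem \ref{th:ineqbeta} (which in fact yields a strict inequality on the open interval, more than we need here). Hence I would simply invoke Lemma \ref{lem_4} together with Theorem \ref{th:ineqbeta} to conclude.

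Before committing this to paper I would make explicit the correspondence between the point $p$ and the parameter $s$ that underlies the equivalence in Lemma \ref{lem_4}, so that no value of $s$ is silently left out. Using Corollary \ref{cor:nj} one has $\|\Pi(p)\|^{2}=(1+p_{2d+1})/(1-p_{2d+1})$; evaluating the right--hand side of \eqref{eq:otra} at the argument $u=\|\Pi(p)\|$, the relevant beta--function parameter becomes $u^{2}/(1+u^{2})=(1+p_{2d+1})/2$. Thus the natural parameter is $s=(1+p_{2d+1})/2$, and as $p_{2d+1}$ ranges over $(-1,1)$ this $s$ sweeps out exactly $(0,1)$. This verifies that establishing assertion (2) for all $s\in(0,1)$ does establish assertion (1) for every admissible $p$, with the range of $s$ matching precisely.

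A minor point to address is the domain. The frame $\dot p^{1},\ldots,\dot p^{2d}$ and the norm formulas of Proposition \ref{prop_8} require $p\neq\pm\mathbf{n}$, which the correspondence above sends into $s\in(0,1)$; the two poles correspond to the endpoints $s=0,1$, where the tangent frame degenerates, so the statement there is either vacuous or recovered by continuity of $\|D\phi(p)\|$. I would note this briefly so that the phrase ``for all $p\in\mathbb{S}^{2d}$'' is justified.

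The main obstacle is therefore \emph{not} located in Proposition \ref{prop_9} at all: once Lemma \ref{lem_4} and Theorem \ref{th:ineqbeta} are in hand, the result is a one--line deduction. The real work sits upstream, in the lengthy differential--geometric computation behind the equivalence of Lemma \ref{lem_4} (which rewrites the two competing norms of Proposition \ref{prop_8} through \eqref{eq:otra} into the beta--function expressions) and in the sharp analytic estimate of Theorem \ref{th:ineqbeta}. Were I to prove Proposition \ref{prop_9} from scratch without those, reproducing that beta--function inequality would be the hard part.
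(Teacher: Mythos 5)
Your proposal is correct and matches the paper's own argument, which likewise obtains Proposition \ref{prop_9} as an immediate consequence of Lemma \ref{lem_4} and Theorem \ref{th:ineqbeta}; your explicit identification $s=(1+p_{2d+1})/2$ and the remark about the poles are consistent with (and slightly more careful than) the paper's one-line deduction.
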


\begin{cor}
\label{cor_2}
Let $p,q \in \mathbb{S}^{2d}$, $\| p-q \| \leq \tau$ and $p_{2d+1} < \epsilon$ where $0<\epsilon+\tau<1$. Then, for every $x\in\S{2d}$ in the geodesic segment from $p$ to $q$ we have 
\begin{equation}
\label{eq_6}
\left\| D\phi(x)\right\|
\leq
\frac{g^{-1}\left( \sqrt{\frac{1+(\tau + \epsilon)}{1-(\tau + \epsilon)}} \right)}{\sqrt{1-(\tau + \epsilon)^{2}}\sqrt{1 + \left( g^{-1}\left( \sqrt{\frac{1+(\tau + \epsilon)}{1-(\tau + \epsilon)}} \right) \right)^{2} }} 
\leq 
\frac{1}{\sqrt{1-(\tau + \epsilon)^{2}}}
=
M_{\epsilon, \tau} .
\end{equation}
\end{cor}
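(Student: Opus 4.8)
The plan is to reduce \eqref{eq_6} to a one--variable monotonicity statement for the operator norm of $D\phi$. First I would combine Propositions \ref{prop_8} and \ref{prop_9}: since $D\phi(x)$ sends the orthonormal basis $\{\dot x^1,\ldots,\dot x^{2d}\}$ of $x^\perp$ to an orthogonal family, its operator norm is the largest of the numbers $\|D\phi(x)\dot x^i\|$, and Proposition \ref{prop_9} identifies this maximum as the common value attained on $\dot x^1,\ldots,\dot x^{2d-1}$. Writing $c=x_{2d+1}$ this gives
\[
\|D\phi(x)\|=F(c),\qquad
F(c)=\frac{g^{-1}\!\left(\sqrt{\tfrac{1+c}{1-c}}\right)}{\sqrt{1-c^{2}}\,\sqrt{1+g^{-1}\!\left(\sqrt{\tfrac{1+c}{1-c}}\right)^{2}}}.
\]
The corollary then splits into three tasks: (a) $F$ is non--decreasing on $(-1,1)$; (b) every $x$ on the geodesic from $p$ to $q$ has $c=x_{2d+1}<\epsilon+\tau$; and (c) the trivial bound $u/\sqrt{1+u^2}\le1$ passes from $F(\epsilon+\tau)$ to $M_{\epsilon,\tau}$.

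The heart of the matter is (a). Here I would substitute $w=\tfrac{1+c}{2}\in(0,1)$ and rewrite $g^{-1}$ through its defining relation \eqref{eq:otra}. Using $\tfrac{t^2}{1+t^2}=w$ for $t=\sqrt{(1+c)/(1-c)}$ and $1-c^2=4w(1-w)$, the factor $u/\sqrt{1+u^2}$ becomes $\mathrm{I}_w(d,d)^{1/(2d)}$ and $F$ collapses to the clean form
\[
F(c)=G(w),\qquad G(w)=\frac{\mathrm{I}_w(d,d)^{1/(2d)}}{2\sqrt{w(1-w)}}.
\]
Monotonicity of $G$ is equivalent to that of $P(w)=\mathrm{I}_w(d,d)^{1/d}/(w(1-w))$, and a logarithmic derivative gives
\[
w(1-w)\,\frac{P'(w)}{P(w)}=\frac{\psi(w)}{d\,\mathrm{B}_w(d,d)},\qquad
\psi(w)=w^d(1-w)^d-d(1-2w)\mathrm{B}_w(d,d).
\]
The decisive observation is that after differentiating, the two contributions of size $d(1-2w)w^{d-1}(1-w)^{d-1}$ cancel and one is left with the remarkably simple $\psi'(w)=2d\,\mathrm{B}_w(d,d)>0$; since $\psi(0)=0$ this yields $\psi\ge0$ on $(0,1)$, hence $P'\ge0$ and $F$ non--decreasing.

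For (b) I would use that the chordal distance to $p$ increases monotonically along the minimizing geodesic: $\|x-p\|=2\sin\!\big(d_{\mathbb{S}^{2d}}(x,p)/2\big)\le 2\sin\!\big(d_{\mathbb{S}^{2d}}(p,q)/2\big)=\|p-q\|\le\tau$. As the last coordinate is $1$--Lipschitz, $x_{2d+1}\le p_{2d+1}+\|x-p\|<\epsilon+\tau$. Combining (a) and (b) gives $\|D\phi(x)\|=F(x_{2d+1})\le F(\epsilon+\tau)$, which is precisely the middle term of \eqref{eq_6}; and for (c), writing $u=g^{-1}\!\big(\sqrt{(1+\epsilon+\tau)/(1-\epsilon-\tau)}\big)$ and using $u/\sqrt{1+u^2}\le1$, we obtain $F(\epsilon+\tau)\le(1-(\epsilon+\tau)^2)^{-1/2}=M_{\epsilon,\tau}$.

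The main obstacle is (a): $F$ is a product of $1/\sqrt{1-c^2}$, which is \emph{not} monotone through $c=0$, and an increasing factor, so monotonicity is invisible from the raw formula and only emerges after the passage to $G(w)$ and the identity $\psi'(w)=2d\,\mathrm{B}_w(d,d)$. Everything else is bookkeeping; I would only remark that the single point $x=-\mathbf n$ (where $\phi$ degenerates) never enters the argument, since $x_{2d+1}<\epsilon+\tau<1$ keeps $x$ off the north pole and any remaining boundary configuration is covered by continuity.
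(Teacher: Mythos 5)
Your proof is correct and follows the same overall route as the paper: Propositions \ref{prop_8} and \ref{prop_9} reduce $\|D\phi(x)\|$ to the explicit function of $x_{2d+1}$ appearing as the middle term of \eqref{eq_6}, the identity \eqref{eq:otra} rewrites it as $\sqrt{\mathrm{I}_{(1+x_{2d+1})/2}(d,d)^{1/d}/(1-x_{2d+1}^2)}$, one checks that this is increasing in $x_{2d+1}$, and the geodesic hypothesis forces $x_{2d+1}\leq\epsilon+\tau$. The one place where you go beyond the paper is the monotonicity step: the paper simply declares that ``it is clear that this is an increasing function of $x_{2d+1}$'', whereas, as you rightly observe, the factor $1/\sqrt{1-c^{2}}$ is not monotone through $c=0$, so the claim is not immediate on the interval $x_{2d+1}<0$. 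Your substitution $w=(1+c)/2$, the reduction to $P(w)=\mathrm{I}_w(d,d)^{1/d}/(w(1-w))$, and the computation $\psi'(w)=2d\,\mathrm{B}_w(d,d)>0$ together with $\psi(0)=0$ (the two terms of size $d(1-2w)w^{d-1}(1-w)^{d-1}$ do cancel as you claim) supply a correct and clean justification of exactly the point the paper leaves unproved. The remaining ingredients --- the chordal-distance and Lipschitz argument giving $x_{2d+1}\leq\epsilon+\tau$, and the trivial bound $u/\sqrt{1+u^{2}}\leq 1$ for the second inequality --- coincide with the paper's.
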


\begin{dem}
\noindent The second inequality is trivial.
For the first one, note that from propositions \ref{prop_8} and \ref{prop_9}, for $x$ as in the hypotheses we have
\[
 \|D\phi(x)\dot{x}\|\leq\frac{g^{-1}\left( \sqrt{\frac{1+ x_{2d+1}}{1- x_{2d+1}}} \right)}{\sqrt{1- x_{2d+1}^{2}}\sqrt{1 + \left( g^{-1}\left( \sqrt{\frac{1+ x_{2d+1}}{1- x_{2d+1}}} \right) \right)^{2} }}\stackrel{\eqref{eq:otra}}{=}\sqrt{\frac{\mathrm{I}_{\frac{1+x_{2d+1}}{2}}^{1/d}}{1-x_{2d+1}^2}}
\]
It is clear that this is an increasing function of $x_{2d+1}$. The claim of the proposition follows noting that $p_{2d+1}\leq \epsilon$ and $\|p-q\|\leq \tau$ implies $|x_{2d-1}-p_{2d+1}|\leq \tau$ and hence $x_{2d+1}\leq \tau+\epsilon$.

$\square$
\end{dem}

\begin{prop}
\label{prop_10}
Let $\epsilon,\tau \in \left( 0, 1\right)$ with $\epsilon+\tau<1$. Then for all $p,q \in \mathbb{S}^{2d}$ we have
\begin{multline*}
\displaystyle\int_{p,q \in \mathbb{S}^{2d}} \frac{\left( \frac{|K_{*}^{(N)}(p,q)|}{K_{*}^{(N)}(p,p)} \right)^{2}}{\|p-q\|^{s}}  d p d q  
\geq\\
\frac{
W(\epsilon) 
Vol(\mathbb{S}^{2d-1})}{
(2d-s)} \left(1-\frac{\tau^2}{4}\right)^{d-1}
\tau^{2d-s}
\left(1- \tau^{2} \frac{(1+\tau^{2})^2 }{1 - (\epsilon + \tau)^2} \right)^{L}
\end{multline*}
where $W(\epsilon)$ is the volume of the set of $p\in\S{2d}$ such that $p_{2d+1}\leq\epsilon$. 
\end{prop}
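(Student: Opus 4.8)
The plan is to throw away most of the domain of integration and reduce the problem to an explicit spherical--cap integral. Since the integrand is nonnegative by Proposition \ref{prop_7}, restricting the double integral to the set of pairs $(p,q)$ with $p_{2d+1}\leq\epsilon$ and $\|p-q\|\leq\tau$ can only decrease its value, so I would work on this region from the start. Here $p$ ranges over the set of volume $W(\epsilon)$ and, for each such $p$, the variable $q$ ranges over a geodesic cap of chordal radius $\tau$ around $p$.

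On this region I would first apply Proposition \ref{prop_7} to get $\bigl(|K_*^{(N)}(p,q)|/K_*^{(N)}(p,p)\bigr)^2\geq(1-\|\phi(p)-\phi(q)\|^2)^L$, and then control $\|\phi(p)-\phi(q)\|$ by combining Lemma \ref{lem_2} with Corollary \ref{cor_2}. The hypotheses $p_{2d+1}\leq\epsilon$ and $\|p-q\|\leq\tau$ are exactly those guaranteeing $x_{2d+1}\leq\epsilon+\tau$ along the whole geodesic from $p$ to $q$, so Corollary \ref{cor_2} bounds $\sup\|D\phi(x)\|$ by $M_{\epsilon,\tau}$. Using $\|p-q\|\leq\tau$ and $M_{\epsilon,\tau}^2=1/(1-(\epsilon+\tau)^2)$ this yields the uniform estimate
\[
\|\phi(p)-\phi(q)\|^2\leq\|p-q\|^2(1+\|p-q\|^2)^2M_{\epsilon,\tau}^2\leq\tau^2\frac{(1+\tau^2)^2}{1-(\epsilon+\tau)^2},
\]
so that $(1-\|\phi(p)-\phi(q)\|^2)^L\geq\bigl(1-\tau^2(1+\tau^2)^2/(1-(\epsilon+\tau)^2)\bigr)^L$, a constant that I would factor out of the integral. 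This is precisely the last factor in the claimed lower bound.

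It then remains to bound below $\int_{\{p_{2d+1}\leq\epsilon\}}\bigl(\int_{\{q:\|p-q\|\leq\tau\}}\|p-q\|^{-s}\,dq\bigr)\,dp$. By rotational invariance the inner integral is independent of $p$, so the double integral equals $W(\epsilon)\,J(\tau)$ with $J(\tau)=\int_{\{\|p-q\|\leq\tau\}}\|p-q\|^{-s}\,dq$. To evaluate $J(\tau)$ I would pass to the chordal radius $r=\|p-q\|=2\sin(\alpha/2)$, where $\alpha$ is the spherical angle from $p$; slicing the sphere into the spheres $\mathbb{S}^{2d-1}$ at fixed angle turns the surface element into $Vol(\mathbb{S}^{2d-1})\,r^{2d-1}(1-r^2/4)^{d-1}\,dr$, whence
\[
J(\tau)=Vol(\mathbb{S}^{2d-1})\int_0^{\tau}r^{2d-1-s}\Bigl(1-\frac{r^2}{4}\Bigr)^{d-1}\,dr.
\]
Since $(1-r^2/4)^{d-1}$ is nonincreasing in $r$, I would replace it by its value at $r=\tau$ and integrate the remaining monomial, using $s<2d$ to obtain $\int_0^\tau r^{2d-1-s}\,dr=\tau^{2d-s}/(2d-s)$; this gives $J(\tau)\geq Vol(\mathbb{S}^{2d-1})(1-\tau^2/4)^{d-1}\tau^{2d-s}/(2d-s)$. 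Assembling the constant factor with $W(\epsilon)\,J(\tau)$ reproduces the stated inequality.

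I expect the main obstacle to be the bookkeeping in combining Lemma \ref{lem_2} with Corollary \ref{cor_2} rather than the final integration: one must check that the entire geodesic segment stays inside the cap where Corollary \ref{cor_2} is valid, and that the base $1-\tau^2(1+\tau^2)^2/(1-(\epsilon+\tau)^2)$ is nonnegative, so that raising to the power $L$ preserves the inequality. The cap computation of $J(\tau)$ is routine but must carry the correct Jacobian factor $(1-r^2/4)^{d-1}$ arising from the change from spherical angle to chordal distance.
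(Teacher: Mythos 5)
Your proposal is correct and follows essentially the same route as the paper: restrict to the region $p_{2d+1}\leq\epsilon$, $\|p-q\|\leq\tau$, combine Proposition \ref{prop_7}, Lemma \ref{lem_2} and Corollary \ref{cor_2} to pull out the constant $\bigl(1-\tau^{2}(1+\tau^{2})^{2}/(1-(\epsilon+\tau)^{2})\bigr)^{L}$, and then evaluate the remaining cap integral via the zonal-function formula, bounding $(1-r^{2}/4)^{d-1}$ by its value at $r=\tau$. The only differences are cosmetic (you factor the $L$-th power constant out before rather than after the change of variables, and integrate in the chordal radius $r$ instead of $t=\langle p,q\rangle$), and your remark about checking nonnegativity of the base before raising to the power $L$ is a subtlety the paper's own write-up also leaves implicit.
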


\noindent In order to prove Proposition \ref{prop_10}, we present Lemma \ref{prop_2} (that follows from the change of variables formula applied to the projection from the cylinder to the sphere).

\begin{lem}
\label{prop_2}
If $f:\S{m}\to\R$ satisfies $f(q) = g(\langle p,q\rangle)$, for some $p\in\S{m}$ and some $g:\left[-1,1\right] \longrightarrow \mathbb{R}$, then
\begin{equation*}
\displaystyle\int_{p \in \mathbb{S}^{m}} f(p) dp = 
Vol(\mathbb{S}^{m-1}) \displaystyle\int_{-1}^{1} g(t) (1-t^{2})^{\frac{m}{2} - 1},
\end{equation*}
assuming that $f$ is integrable or $f$ is measurable and non--negative.
\end{lem}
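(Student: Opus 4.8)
The plan is to reduce the statement to the case where the fixed point is the north pole by invoking the rotational invariance of the surface measure on $\S{m}$, and then to parametrize the sphere by latitude and carry out an explicit change of variables. The hypothesis that $f(q)$ depends on $q$ only through $\langle p,q\rangle$ is exactly what makes this reduction possible.

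First I would observe that $dp$ is invariant under the orthogonal group $O(m+1)$, so I may apply a rotation sending the fixed point $p$ to the north pole $\mathbf{e}=(0,\ldots,0,1)\in\S{m}$; this changes neither the left-hand integral nor $\mathrm{Vol}(\S{m-1})$. After this reduction $\langle p,q\rangle$ becomes simply the last coordinate of the integration variable, so it suffices to prove
\[
\int_{q\in\S{m}} g(q_{m+1})\,dq = \mathrm{Vol}(\S{m-1})\int_{-1}^{1} g(t)\,(1-t^{2})^{\frac{m}{2}-1}\,dt .
\]

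Next I would introduce the latitude parametrization $\Phi:(-1,1)\times\S{m-1}\to\S{m}$ given by $\Phi(t,\omega)=\left(\sqrt{1-t^{2}}\,\omega,\;t\right)$, which covers $\S{m}$ up to the two poles (a set of measure zero). The key step is the Jacobian computation. Differentiating gives $\partial_t\Phi=\left(-t(1-t^{2})^{-1/2}\omega,\;1\right)$, of norm $(1-t^{2})^{-1/2}$, while for a tangent vector $\dot\omega$ to $\S{m-1}$ one has $D\Phi(t,\omega)[\dot\omega]=\left(\sqrt{1-t^{2}}\,\dot\omega,\;0\right)$, of norm $\sqrt{1-t^{2}}\,\|\dot\omega\|$. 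These directions are mutually orthogonal since $\langle\omega,\dot\omega\rangle=0$, so the Jacobian factorizes as the product of the $t$-direction norm and the $m-1$ tangential norms:
\[
\mathrm{Jac}\,\Phi(t,\omega)=(1-t^{2})^{-\frac{1}{2}}\cdot(1-t^{2})^{\frac{m-1}{2}}=(1-t^{2})^{\frac{m}{2}-1}.
\]

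Finally I would apply the change-of-variables formula together with Fubini's theorem, which is justified under the stated hypothesis that $f$ is integrable, or measurable and non-negative, to obtain
\[
\int_{\S{m}} g(q_{m+1})\,dq=\int_{-1}^{1}\!\int_{\S{m-1}} g(t)\,(1-t^{2})^{\frac{m}{2}-1}\,d\omega\,dt=\mathrm{Vol}(\S{m-1})\int_{-1}^{1} g(t)\,(1-t^{2})^{\frac{m}{2}-1}\,dt,
\]
which is the claimed identity. The only delicate point is the Jacobian step, where one must verify both the orthogonality of the coordinate frame and the exact exponent of $(1-t^{2})$; everything else is routine. This is precisely the higher-dimensional Archimedes (cylinder-to-sphere) projection referred to in the statement.
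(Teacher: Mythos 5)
Your proof is correct and is exactly the argument the paper has in mind: the paper gives no details, merely remarking that the lemma ``follows from the change of variables formula applied to the projection from the cylinder to the sphere,'' and your latitude parametrization $(t,\omega)\mapsto(\sqrt{1-t^{2}}\,\omega,t)$ with Jacobian $(1-t^{2})^{\frac{m}{2}-1}$ is precisely that projection, worked out in full. The rotation reduction and the orthogonality/Jacobian computation are all verified correctly.
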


\begin{dem}[Proof of Proposition \ref{prop_10}]
\noindent From lemmas \ref{prop_7} and \ref{lem_2} and Corollary \ref{cor_2}, $\tau, \epsilon > 0$ and $0 < \tau + \epsilon < 1$ we have:
\begin{multline*}
\displaystyle\int_{\mathbb{S}^{2d} \times \mathbb{S}^{2d}} \frac{\left( \frac{|K_{*}^{(N)}(p,q)|}{K_{*}^{(N)}(p,p)} \right)^{2}}{\|p-q\|^{s}}  d p d q\geq \displaystyle\int_{\| p-q \|\leq \tau, p_{2d+1} \leq \epsilon} \frac{\left( \frac{|K_{*}^{(N)}(p,q)|}{K_{*}^{(N)}(p,p)} \right)^{2}}{\|p-q\|^{s}}  d p d q
\geq\\
\displaystyle\int_{\| p-q \|\leq \tau, p_{2d+1} \leq \epsilon} \frac{ \left(1- \frac{(1+\tau^{2})^2 \|p-q\|^2 }{1 - (\epsilon + \tau)^2}\right)^{L}}{\|p-q\|^{s}}  d p d q. \\
\end{multline*}

We apply Fubini's theorem and Lemma \ref{prop_2}, transforming the last integral into: 
\begin{equation*}
\begin{split}
& \displaystyle\int_{p_{2d+1}  \leq \epsilon} 
\left[
\displaystyle\int_{\| p-q \|\leq \tau} 
\frac{ \left(1- \frac{(1+\tau^{2})^2 \|p-q\|^2 }{1 - (\epsilon + \tau)^2} \right)^{L}}{\|p-q\|^{s}}  d q 
\right] d p = \\
\end{split}
\end{equation*}
\begin{equation*}
\begin{split}
& = \displaystyle\int_{p_{2d+1}  \leq \epsilon} 
\left[
\displaystyle\int_{\sqrt{2-2\langle p,q\rangle}\leq \tau} 
\frac{ \left(1- \frac{(1+\tau^{2})^2 }{1 - (\epsilon + \tau)^2} (2-2\langle p,q\rangle)\right)^{L}}{\sqrt{2-2\langle p,q\rangle}^{\,s}}  d q 
\right] d p = \\
\end{split}
\end{equation*}
\begin{equation}
\label{eq_7}
\begin{split}
& = Vol(\mathbb{S}^{2d-1}) \displaystyle\int_{1 - \frac{\tau^2}{2}}^{1} 
\frac{ \left(1- \frac{(1+\tau^{2})^2 }{1 - (\epsilon + \tau)^2} (2-2t)\right)^{L}}{\sqrt{2-2t}^{\,s}} (1-t^2)^{d-1} d t 
\displaystyle\int_{p_{2d+1}  \leq \epsilon}  d p.
\end{split}
\end{equation}

\noindent Then, \eqref{eq_7} equals:

\begin{equation*}
\frac{W(\epsilon) Vol(\mathbb{S}^{2d-1})}{2^\frac{s}{2}} 
\displaystyle\int_{1 - \frac{\tau^2}{2}}^{1} 
\frac{ \left(1- \frac{(1+\tau^{2})^2}{1 - (\epsilon + \tau)^2} (2-2t)\right)^{L}}{\sqrt{1-t}^{s}} (1-t^2)^{d-1} d t.
\end{equation*}

\noindent where $W(\epsilon)$ is the volume of the set of points of $\mathbb{S}^{2d}$ such that their last coordinate is less or equal to $\epsilon$.
With the change of variables $u=1-t$ and using $1-t^2=(1-t)(1+t)$ we have proved that:
\begin{multline}\label{eq_10}
\displaystyle\int_{\mathbb{S}^{2d} \times \mathbb{S}^{2d}} \frac{\left( \frac{|K_{*}^{(n)}(p,q)|}{K_{*}^{(n)}(p,p)} \right)^{2}}{\|p-q\|^{s}}  d p d q\geq\\ \frac{W(\epsilon) Vol(\mathbb{S}^{2d-1})}{2^\frac{s}{2}} \left(2-\frac{\tau^2}{2}\right)^{d-1}
\displaystyle\int_{0}^{\frac{\tau^2}{2}} 
 \left(1- 2 \frac{(1+\tau^{2})^2 }{1 - (\epsilon + \tau)^2} u\right)^{L} u^{d-1-\frac{s}{2}} d u.
\end{multline}

\noindent Since $u \leq \frac{\tau^{2}}{2}$, $\left(1- 2 \frac{(1+\tau^{2})^2 }{1 - (\epsilon + \tau)^2} u\right)^{L} \geq \left(1- \tau^{2} \frac{(1+\tau^{2})^2 }{1 - (\epsilon + \tau)^2} \right)^{L}$ and so

\begin{multline}\label{eq:22}
\displaystyle\int_{\mathbb{S}^{2d} \times \mathbb{S}^{2d}} \frac{\left( \frac{|K_{*}^{(n)}(p,q)|}{K_{*}^{(n)}(p,p)} \right)^{2}}{\|p-q\|^{s}}  d p d q\geq\\ 
\frac{W(\epsilon) Vol(\mathbb{S}^{2d-1})}{2^\frac{s}{2}} \left(2-\frac{\tau^2}{2}\right)^{d-1}
\left(1- \tau^{2} \frac{(1+\tau^{2})^2 }{1 - (\epsilon + \tau)^2} \right)^{L} 
\displaystyle\int_{0}^{\frac{\tau^2}{2}} 
 u^{d-1-\frac{s}{2}} d u.
\end{multline}

\noindent We then have proved the following lower bound for the integral in the proposition:
\[
\frac{
W(\epsilon) 
Vol(\mathbb{S}^{2d-1})}{
(2d-s)} \left(1-\frac{\tau^2}{4}\right)^{d-1}
\tau^{2d-s}
\left(1- \tau^{2} \frac{(1+\tau^{2})^2 }{1 - (\epsilon + \tau)^2} \right)^{L}.
\]
The proof is now complete.

$\square$
\end{dem}

\subsubsection{Proof of Theorem \ref{th_2}}
First we are going to give a bound for $W(\epsilon)$.

\begin{prop}\label{prop:enesima}
Let $r>0$ and let $\vartheta\left( \frac{\pi}{2} + r \right)$ be the volume of the spherical cap of radius $\frac{\pi}{2} + r$ in $\mathbb{S}^{n+1}$, then 
\begin{equation*}
\vartheta\left( \frac{\pi}{2} + r \right) 
\geq
Vol(\mathbb{S}^{n+1}) \left( 1 - \frac{e^{\frac{-r^2 n}{2}}}{2} \sqrt{1 + \frac{1}{n}} \right)
\end{equation*}
\end{prop}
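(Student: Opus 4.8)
The plan is to reduce the statement to a one–dimensional inequality and then control it by a Gaussian comparison. Placing the centre of the cap at the north pole $\mathbf{n}=(0,\ldots,0,1)\in\mathbb{S}^{n+1}$, the cap of angular radius $\frac{\pi}{2}+r$ is exactly $\{p\in\mathbb{S}^{n+1}:p_{n+2}\geq\cos(\frac\pi2+r)=-\sin r\}$. Applying Lemma \ref{prop_2} with $m=n+1$ and $g(t)=\mathbf{1}_{[-\sin r,1]}(t)$ gives
\[
\vartheta\!\left(\tfrac\pi2+r\right)=Vol(\mathbb{S}^{n})\int_{-\sin r}^{1}(1-t^2)^{\frac{n-1}{2}}\,dt,\qquad Vol(\mathbb{S}^{n+1})=Vol(\mathbb{S}^{n})\int_{-1}^{1}(1-t^2)^{\frac{n-1}{2}}\,dt.
\]
Hence, using the symmetry $t\mapsto-t$ of the integrand,
\[
1-\frac{\vartheta(\frac\pi2+r)}{Vol(\mathbb{S}^{n+1})}=\frac{\int_{\sin r}^{1}(1-t^2)^{\frac{n-1}{2}}\,dt}{\int_{-1}^{1}(1-t^2)^{\frac{n-1}{2}}\,dt},
\]
and the substitution $t=\sin\psi$ turns this into $\frac{1}{2W_n}\int_{r}^{\pi/2}\cos^{n}\psi\,d\psi$, where $W_n=\int_0^{\pi/2}\cos^n\psi\,d\psi$ is the Wallis integral. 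So it suffices to prove $\int_r^{\pi/2}\cos^n\psi\,d\psi\leq W_n\,e^{-nr^2/2}\sqrt{1+1/n}$.

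Next I would bound the numerator by a Gaussian tail. From $\cos\psi\leq e^{-\psi^2/2}$ on $[0,\pi/2]$ (which follows because $\tan\psi\geq\psi$ shows $\frac{d}{d\psi}(-\frac{\psi^2}2-\log\cos\psi)\geq0$, so this function is nonnegative) we obtain $\int_r^{\pi/2}\cos^n\psi\,d\psi\leq\int_r^{\infty}e^{-n\psi^2/2}\,d\psi=\frac1{\sqrt n}\int_{r\sqrt n}^{\infty}e^{-v^2/2}\,dv$. I would then invoke the sharp tail estimate
\[
\int_a^{\infty}e^{-v^2/2}\,dv\leq\frac{\sqrt{2\pi}}{2}\,e^{-a^2/2},\qquad a\geq0,
\]
applied at $a=r\sqrt n$, which yields $\int_r^{\pi/2}\cos^n\psi\,d\psi\leq\frac{\sqrt{2\pi}}{2\sqrt n}e^{-nr^2/2}$.

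For the denominator I would use the two classical facts that $W_n$ is decreasing in $n$ and that consecutive Wallis integrals satisfy $W_nW_{n+1}=\frac{\pi}{2(n+1)}$; together these give $W_n^2\geq W_nW_{n+1}=\frac{\pi}{2(n+1)}$, i.e. $W_n\geq\sqrt{\pi/(2(n+1))}$. Substituting the numerator bound and this lower bound into $\frac{1}{2W_n}\int_r^{\pi/2}\cos^n\psi\,d\psi$ and simplifying collapses the constants to exactly $\frac12\sqrt{1+1/n}\,e^{-nr^2/2}$, which is the assertion.

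The main obstacle is producing the exact constant, and in particular choosing a tail estimate that remains valid for small $r$: the usual Mills–ratio bound $\int_a^\infty e^{-v^2/2}\,dv\leq a^{-1}e^{-a^2/2}$ blows up as $a\to0$ and hence fails near $r=0$, where the cap is nearly a hemisphere and the target quantity stays bounded. The estimate $\int_a^\infty e^{-v^2/2}\,dv\leq\frac{\sqrt{2\pi}}2e^{-a^2/2}$ is the right substitute: it holds with equality at $a=0$, and I would prove it by checking that $G(a)=\frac{\sqrt{2\pi}}2e^{-a^2/2}-\int_a^\infty e^{-v^2/2}\,dv$ satisfies $G(0)=0$, $\lim_{a\to\infty}G(a)=0$ and has a single interior critical point, forcing $G\geq0$. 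Combining this with the sharp Wallis lower bound $W_n\geq\sqrt{\pi/(2(n+1))}$ is precisely what makes the constants telescope to $\frac12\sqrt{1+1/n}$.
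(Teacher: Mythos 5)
Your proof is correct and follows essentially the same route as the paper: both reduce the claim to bounding $\frac{1}{2W_n}\int_r^{\pi/2}\cos^n\psi\,d\psi$, control the numerator by the Gaussian comparison $\cos\psi\leq e^{-\psi^2/2}$ together with the tail estimate $\int_a^\infty e^{-v^2/2}\,dv\leq\frac{\sqrt{2\pi}}{2}e^{-a^2/2}$, and bound $W_n$ from below by $\sqrt{\pi/(2(n+1))}$. The only difference is that the paper imports the first ingredient from Milman--Schechtman (Corollary 2.2) and the second from Gautschi's inequality, whereas you supply self-contained elementary proofs of both (the monotonicity argument for the Gaussian tail and the Wallis product identity $W_nW_{n+1}=\frac{\pi}{2(n+1)}$), all of which check out.
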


\begin{dem}
As in \cite[Corollary 2.2]{milman1986asymptotic} we consider the normalized measure of $\vartheta\left( \frac{\pi}{2} + r \right)$,
\begin{equation*}
\frac{\vartheta\left( \frac{\pi}{2} + r \right)}{Vol(\mathbb{S}^{n+1})}
=
\frac{\int_{\frac{-\pi}{2}}^{r} \cos^{n}\theta d \theta}{\int_{\frac{-\pi}{2}}^{\frac{\pi}{2}} \cos^{n}\theta d \theta}
\end{equation*}

\noindent The same result shows that

\begin{equation*}
1 - \frac{\vartheta\left( \frac{\pi}{2} + r \right)}{Vol(\mathbb{S}^{n+1})}
\leq
\frac{e^{\frac{-r^2 n}{2}} \sqrt{\frac{\pi}{2}}}{2 \sqrt{n} I_{n}},
\end{equation*}

\noindent where $I_{n} = \int_{0}^{\frac{\pi}{2}} \cos^{n}\theta d \theta = \frac{\sqrt{\pi} \Gamma \left( \frac{n}{2} + \frac{1}{2} \right)}{2 \Gamma \left( \frac{n}{2} + 1 \right)}$.
Applying Gautschi's inequality (see \cite[Theorem 1]{LM2016}) we obtain that $\frac{\Gamma \left( \frac{n}{2} + \frac{1}{2}  \right)}{\Gamma \left( \frac{n}{2} + 1\right)} \geq \sqrt{\frac{2}{n + 1}}$ so 

\begin{equation*}
1 - \frac{\vartheta\left( \frac{\pi}{2} + r \right)}{Vol(\mathbb{S}^{n+1})}
\leq
\frac{e^{\frac{-r^2 n}{2}}}{2} \sqrt{1 + \frac{1}{n}} 
\end{equation*}

\noindent and Proposition \ref{prop:enesima} is proved.

$\square$
\end{dem}

Now, taking $n=2d-1$ in Proposition \ref{prop:enesima},

\begin{equation*}
W \left( \frac{1}{\sqrt{d}} \right)
 \geq\vartheta\left( \frac{\pi}{2} + \frac{1}{\sqrt{d}} \right)
\geq
Vol(\mathbb{S}^{2d})
\left(1-\frac{e^{-1+\frac1{2d}}}{2\sqrt{1-\frac1{2d}}}\right)
\end{equation*}

\noindent and now we can substitute in the formula from Proposition \ref{prop_10} obtaining

\begin{multline*}
 \Esp{x\sim\X}{\E{s}(\omega_{N})}
\leq  N^{2} V_{s}(\mathbb{S}^{2d})- \frac{N^2 Vol(\mathbb{S}^{2d-1})}{(2d-s) Vol(\mathbb{S}^{2d})}\times  \\
\left(1-\frac{\tau^2}{4}\right)^{d-1}
\tau^{2d-s}
\left(1- \tau^{2} \frac{(1+\tau^{2})^2 }{1 - \left(\frac1{\sqrt d}+ \tau\right)^2} \right)^{L}
\left(1-\frac{e^{-1+\frac1{2d}}}{2\sqrt{1-\frac1{2d}}}\right)
\end{multline*}

\noindent where $0 < \tau  < 1-1/\sqrt{d} $. This finishes the proof of Theorem \ref{th_2}.

$\square$

\subsection{Proof of Theorem \ref{cor_1}}\label{sec:proofmainintro}

From Theorem \ref{th_2},
\begin{multline*}
 \frac{\Esp{x\sim\X}{\E{s}(\omega_{N})}
- N^{2} V_{s}(\mathbb{S}^{2d})}{N^{1+\frac{s}{2d}}}
\leq -\frac{N^{1-\frac{s}{2d}} Vol(\mathbb{S}^{2d-1})}{(2d-s) Vol(\mathbb{S}^{2d})}\times \\
\left(1-\frac{\tau^2}{4}\right)^{d-1}
\tau^{2d-s}
\left(1- \tau^{2} \frac{(1+\tau^{2})^2 }{1 - \left(\frac1{\sqrt{d}}+ \tau\right)^2} \right)^{L}
 \left(1-\frac{e^{-1+\frac1{2d}}}{2\sqrt{1-\frac1{2d}}}\right)
\end{multline*}
Fix any $C>0$ and let $\tau = \sqrt{C/L}$ (which satisfies $\tau<1-\frac1{\sqrt{d}}$ for large enough $L$). Then, the expression above equals
\[
\frac{-N^{1-\frac{s}{2d}} Vol(\mathbb{S}^{2d-1})}{Vol(\mathbb{S}^{2d}) (2d-s) L^{d - \frac{s}{2}}}
\frac{C^{d - \frac{s}{2}}}{e^\frac{dC}{d-1}}
 \left(1-\frac{e^{-1+\frac1{2d}}}{2\sqrt{1-\frac1{2d}}}\right)
Q_L,
\]
where $Q_L$ is a sequence with $\lim_{L\to\infty}Q_L=1$.

\noindent We recall that $N = {d+L \choose d}$, which implies
\[
 \frac{L^d}{d!}\leq\frac{(L+d)\cdots (L+1)}{d!}=\binom{d+L}{d}=N.
\]
We then have proved:
\begin{multline*}
  \frac{\Esp{x\sim\X}{\E{s}(\omega_{N})}
- N^{2} V_{s}(\mathbb{S}^{2d})}{N^{1+\frac{s}{2d}}}
\leq \\
 -\frac{N^{1-\frac{s}{2d}} Vol(\mathbb{S}^{2d-1})}{Vol(\mathbb{S}^{2d}) (2d-s) (Nd!)^{1-\frac{s}{2d}}}
\frac{C^{d - \frac{s}{2}}}{e^\frac{dC}{d-1}}
 \left(1-\frac{e^{-1+\frac1{2d}}}{2\sqrt{1-\frac1{2d}}}\right)
Q_L,
\end{multline*}
which is valid for all $C>0$. The optimal $C$ is easily computed:
\[
C=d-1-\frac{d-1}{2d}s.
\]
The theorem follows substituting this value of $C$ in the formula above.

$\square$


\begin{bibdiv}

\begin{biblist}

\bib{EJP3733}{article}{
	author = {Alishahi, K}
   author={Zamani, M},
	title = {The spherical ensemble and uniform distribution of points on the sphere},
	journal = {Electron. J. Probab.},
	fjournal = {Electronic Journal of Probability},
	volume = {20},
	year = {2015},
	keywords = {Random matrices; Determinantal point processes; Discrepancy; Riesz energies},
	pages = {no. 23, 1-27},
	issn = {1083-6489},
	doi = {10.1214/EJP.v20-3733},    
        url = {http://ejp.ejpecp.org/article/view/3733}}

\bib{BE2018}{article}{
author={Beltr\'{a}n, C.}
author = {Etayo, U.},
title={The Projective Ensemble and Distribution of Points in Odd-Dimensional Spheres},
journal={Constructive Approximation},
year={2018},
doi={https://doi.org/10.1007/s00365-018-9426-6},
}

\bib{BMOC2015energy}{article}{
    AUTHOR = {Beltr\'an, C. }AUTHOR = { Marzo, J. }AUTHOR = { Ortega-Cerd\`a, J.},
     TITLE = {Energy and discrepancy of rotationally invariant determinantal
              point processes in high dimensional spheres},
   JOURNAL = {J. Complexity},
  FJOURNAL = {Journal of Complexity},
    VOLUME = {37},
      YEAR = {2016},
     PAGES = {76--109},
}

\bib{LB2015}{article}{
author={Betermin, L.}
author = {Sandier, E.},
title={Renormalized Energy and Asymptotic Expansion of Optimal Logarithmic Energy on the Sphere},
journal={Constructive Approximation},
year={2018},
volume={47},
number={39},
issn={1432-0940},
doi={https://doi.org/10.1007/s00365-016-9357-z},
}

\bib{Olver:2010:NHMF}{book}{
       editor={Boisvert, R.}
       editor={Clark, C.},
       editor={Lozier,  D}
      editor = {Olver, F.} 
       title = {{NIST Handbook of Mathematical Functions}},
   publisher = {Cambridge University Press},
     address = {New York, NY},
        year = {2010},
        note = {Print companion to \cite{NIST:DLMF}}
        }
        
\bib{BHS2012}{article}{
author={Borodachov, S. } 
author={Hardin, D. }
author={ Saff, E. },
title={Low Complexity Methods For Discretizing Manifolds Via Riesz Energy Minimization},
journal={Foundations of Computational Mathematics},
year={2014},
volume={14},
number={6},
pages={1173--1208},
issn={1615-3383},
doi={10.1007/s10208-014-9202-3},
url={http://dx.doi.org/10.1007/s10208-014-9202-3}
}        
        
\bib{Brauchart}{article}{
author = {Brauchart, J.},
title = {About the second term of the asymptotics for optimal Riesz energy on the sphere in the potential-theoretical case},
journal = {Integral Transforms and Special Functions},
volume = {17},
number = {5},
pages = {321-328},
year  = {2006},
publisher = {Taylor & Francis},
}        
        
\bib{Brauchart2015293}{article}{
title = {Distributing many points on spheres: Minimal energy and designs} ,
journal = {Journal of Complexity },
volume = {31},
number = {3},
pages = {293 - 326},
year = {2015},
note = {Oberwolfach 2013 },
issn = {0885-064X},
doi = {http://dx.doi.org/10.1016/j.jco.2015.02.003},
url = {http://www.sciencedirect.com/science/article/pii/S0885064X15000205},
author = {Brauchart, J.}
author = {Grabner, P. },
}

\bib{Hough_zerosof}{book}{
    AUTHOR = {Hough, J. B. }AUTHOR = { Krishnapur, M. }AUTHOR = { Peres, Y. }AUTHOR = { Vir\'ag, B.},
     TITLE = {Zeros of {G}aussian analytic functions and determinantal point
              processes},
    SERIES = {University Lecture Series},
    VOLUME = {51},
 PUBLISHER = {American Mathematical Society, Providence, RI},
      YEAR = {2009},
     PAGES = {x+154},
      ISBN = {978-0-8218-4373-4},
   MRCLASS = {60G55 (30B20 30C15 60B20 60F10 60G15 65H04 82B31)},
  MRNUMBER = {2552864},
MRREVIEWER = {Dmitri B. Beliaev},
       DOI = {10.1090/ulect/051},
       URL = {http://dx.doi.org/10.1090/ulect/051},
}

\bib{krishnapur2009}{article}{
author = {Krishnapur, M.},
doi = {10.1214/08-AOP404},
fjournal = {The Annals of Probability},
journal = {Ann. Probab.},
month = {01},
number = {1},
pages = {314--346},
publisher = {The Institute of Mathematical Statistics},
title = {From random matrices to random analytic functions},
url = {http://dx.doi.org/10.1214/08-AOP404},
volume = {37},
year = {2009}
}

\bib{10.2307/117605}{article}{
 ISSN = {00029947},
 URL = {http://www.jstor.org/stable/117605},
 author = {Kuijlaars, A.}
 author={Saff, E.},
 journal = {Transactions of the American Mathematical Society},
 number = {2},
 pages = {523-538},
 publisher = {American Mathematical Society},
 title = {Asymptotics for Minimal Discrete Energy on the Sphere},
 volume = {350},
 year = {1998}
}

\bib{LM2016}{article}{
	author = {Lukarevski, M.},
	title = {A note on Gautschi's inequality and application to Wallis' and Stirling's formula},
	journal = {Publications de l'Institut Mathematique},
	year = {2016},
	keywords = {Random matrices; Determinantal point processes; Discrepancy; Riesz energies},
	pages = {no. 27},
    }
    
\bib{milman1986asymptotic}{book}{
  title={Asymptotic Theory of Finite Dimensional Normed Spaces},
  author={Milman, V.}
  author={Schechtman, G.},
  number={n.{\textordmasculine} 1200},
  lccn={86017871},
  series={Lecture Notes in Mathematics},
  url={https://books.google.es/books?id=tTnvAAAAMAAJ},
  year={1986},
  publisher={Springer-Verlag}
}

\bib{NIST:DLMF}{book}{
         key = { DLMF},
       title = {NIST Digital Library of Mathematical Functions},
         url = {http://dlmf.nist.gov/},
        note = {Online companion to \cite{Olver:2010:NHMF}}
        }

\end{biblist}
\end{bibdiv}


\end{document}